\declaretheoremstyle[
qed={//}
]{defstyle}
\newtheorem{theorem}{Theorem}[section]
\declaretheorem[style=defstyle,sibling=theorem]{definition}
\newtheorem{lemma}[theorem]{Lemma}
\newtheorem{proposition}[theorem]{Proposition}
\newtheorem{claim}[theorem]{Claim}
\newcommand{\pr}[1]{\mathbb{P}\!\left(#1\right)}
\newcommand{\prstart}[2]{\mathbb{P}_{#2}\!\left(#1\right)}
\newcommand{\E}[1]{\mathbb{E}\!\left[#1\right]}
\DeclareMathOperator{\Var}{Var}
\newcommand{\Z}{\mathbb{Z}}
\newcommand{\Znonneg}{\Z_{\ge0}}
\newcommand{\til}[1]{\widetilde{#1}}
\newcommand{\trel}{t_{\mathrm{rel}}}
\newcommand{\tmix}[1]{t_{\mathrm{mix}}\left(#1\right)}
\newcommand{\eps}{\varepsilon}
\newcommand{\tmixtext}{t_{\mathrm{mix}}}
\newcommand{\Pois}[1]{\mathrm{Pois}\!\left(#1\right)}
\newcommand{\Ber}[1]{\mathrm{Ber}\!\left(#1\right)}
\newcommand{\dtv}[2]{d_{\mathrm{TV}}\left({#1},{#2}\right)}
\newcommand{\diam}[1]{\mathrm{diam}\left(#1\right)}
\DeclareMathOperator{\numcomps}{\#comps}
\DeclareMathOperator{\comps}{comps}
\begin{document}

\title{Mixing time of the random walk on the giant component of the random geometric graph}

\author{
Magnus H. Haaland
\thanks{University of Cambridge, Cambridge, UK. E-mail: {mhh36@cam.ac.uk}}
\and An{\dj}ela \v{S}arkovi\'c
\thanks{King's College, University of Cambridge, UK. E-mail: {as2572@cam.ac.uk}}
}
\date{}

\maketitle

\begin{abstract} We consider a random geometric graph obtained by placing a Poisson point process of intensity $1$ in the $d$-dimensional torus of side length $n^{\frac{1}{d}}$ and connecting two points by an edge if their distance is at most $r$. We consider the case of $d\ge 2$ and $r\in [r_{\min}, r_{\max}]$, where $r_{\min}<r_{\max}$ are any constants with $r_{\min}>r_g$ and $r_g$ is a constant above which this graph has a giant component with high probability. We show that, with high probability, the mixing time and the relaxation time of the simple random walk on the giant component in this case are both of order $n^{\frac{2}{d}}$ and that therefore there is no cutoff. We also obtain bounds for the isoperimetric profile of subsets of the giant component of at least polylogarithmic size.
\end{abstract}

\section{Introduction}

In this paper, we study the mixing time of a simple random walk on a random geometric graph. Before stating our results, we start by defining the model.

\begin{definition}[Random Geometric Graph (RGG) on the torus]\label{def:RGG_torus} Let \(\Lambda_n^{(d)}\) be the torus on $\left[\frac{-\sqrt[d]n}2, \frac{\sqrt[d] n}2\right]^d$ with the Euclidean metric $\|\cdot-\cdot\|_2$.
A random geometric graph (RGG) is a random graph \(G=G\left(\Lambda_n^{(d)},\,r(n)\right)\) obtained as follows: Let $V_n$, the set of vertices of $G_n$, be a Poisson Point Process (PPP) on \(\Lambda_n^{(d)}\) of intensity $1$, and let the set of edges be $E_n=\{\{x,y\}: x,y\in V_n, \|x-y\|_2\le r(n)\}$, i.e.\ edges are added between vertices that are within distance at most \(r(n)\) from each other.
\end{definition}

It is known that there exists \(r_g\) such that if \(r(n) \ge (1+\eps)r_g\) for some $\eps>0$, then with high probability \(G\) has a unique giant component of size of order $n$ (see \cite[page 149]{penrose_rgg}).
We denote this giant component by $L_1(G)$ and we study the behaviour of a random walk on the giant in the supercritical regime for $r(n)$ of order $1$.
We next recall the  definitions of mixing and relaxation times.

\begin{definition}\label{def:tmix_cutoff}
Let $X$ be a Markov chain on a finite state space $V$, with a unique invariant distribution~$\pi$. For $\eps\in(0,1)$, we define the $\eps$-mixing time of $X$ as
\[
\tmix{\eps}:=\quad\max_{x\in V}\:\inf\left\{t\ge0:\:\dtv{\prstart{X_t=\cdot}{x}}{\pi(\cdot)}\le\eps\right\}.
\]
where $d_{\mathrm{TV}}$ denotes the total variation distance between two distributions, defined as 
\[
\dtv{\mu}{\nu}=\frac12\sum_{x}|\mu(x)-\nu(x)|
\]
for $\mu$ and $\nu$ probability distributions. 
We write $\tmixtext=\tmix{1/4}$.
\end{definition}

We say that an event $A$ holds with high probability as $n\to\infty$, if $\pr{A}\to 1$ as $n\to\infty$. 
Our main result states the following. 
\begin{theorem}\label{trm:tmixRGG} 
Let $d\geq 2$ and let $r_g<r_{\min}\le r_{\max}$. Then there exist positive constants $c$ and $C$ so that  with high probability as $n\to\infty$ the mixing time of the simple random walk and of the lazy simple random walk on the giant component $L_1(G)$  satisfies 
\[
c n^{\frac2d}\le \tmixtext\le C n^{\frac{2}{d}}.
\] 
\end{theorem}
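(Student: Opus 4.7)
For the lower bound I would use a Poincar\'e test-function argument exploiting the geometric embedding. Take $f(v)=\sin(2\pi v_1/n^{1/d})$, where $v_1$ denotes the first torus-coordinate of $v$. Since every edge of $G$ has Euclidean length at most $r_{\max}$, we have $|f(u)-f(v)|\le Cr_{\max}/n^{1/d}$ on each edge, so the Dirichlet form of the simple random walk on $L_1(G)$ obeys $\Dirform(f,f)\le C/n^{2/d}$. By concentration of the PPP on the torus, $\vrstart{f}{\pi}\ge 1/4$ with high probability. The Poincar\'e inequality then gives $\trel\ge cn^{2/d}$, and since $\tmixtext\ge(\trel-1)\log 2$ for reversible chains, the lower bound follows.

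\textbf{Upper bound via the isoperimetric profile.} The heart of the upper bound is to show that, with high probability, every $S\subset L_1(G)$ with $\log^\alpha n\le|S|\le|L_1(G)|/2$ satisfies $|\partial_E S|\ge c|S|^{(d-1)/d}$, for a suitable $\alpha=\alpha(d)$. I would prove this by multiscale renormalization. Tile the torus with axis-aligned cubes of constant side length $\ell\le r_{\min}/(4\sqrt d)$, so that any two occupied neighbouring cells automatically contain two mutually adjacent vertices of $G$. Call a cell \emph{good} if its PPP count lies in a fixed window $[N_1,N_2]$; by Poisson large deviations the failure probability is some $p_0$ that can be made as small as desired by enlarging the window. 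Peierls/Kesten-type arguments then force connected clusters of bad cells to form a subcritical site percolation, whose sizes have exponentially decaying tails. For a test set $S$, let $\tilde S$ be the union of cells meeting $S$; then $|\partial_{\Z^d}\tilde S|\ge c|\tilde S|^{(d-1)/d}$ by the standard $\Z^d$ isoperimetric inequality, and each $\Z^d$-boundary edge between two good cells contributes a constant number of graph edges to $\partial_E S$, while the contributions from bad-cell clusters are dominated by the subcriticality. A union bound over connected subsets of size $k$ (there are at most $C^k$ such lattice animals) closes the argument once $k\ge\log^\alpha n$.

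\textbf{From isoperimetry to mixing, and the main obstacle.} Given this isoperimetric profile, one obtains $\Phi(u):=\inf\{|\partial_E S|/\deg(S):\pi(S)\le u\}\ge c(nu)^{-1/d}$, using that $\deg(S)\le O(|S|)$ after absorbing bad-cell contributions. The spectral-profile inequality of Morris--Peres / Goel--Montenegro--Tetali then yields
\[
\tmixtext\le c\int_{\pi_*}^{1/2}\frac{du}{u\,\Phi(u)^2}\le Cn^{2/d}\int_0^{1/2}u^{2/d-1}\,du=O(n^{2/d}),
\]
with $\pi_*=\log^\alpha n/n$; the subpolylog scale $u<\pi_*$ is absorbed into an $O(\operatorname{polylog} n)$ burn-in showing that from any starting vertex the walk reaches a $\log^\alpha n$-sized neighbourhood, which is negligible against $n^{2/d}$ since $d\ge 2$. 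The main technical obstacle is establishing the isoperimetric inequality uniformly down to polylogarithmic scales: at such scales thin or dendritic subsets could in principle have anomalously small boundary, and one must use the subcritical bad-cell percolation to argue that the local geometry of $L_1(G)$ is uniformly ``fat'' and Lipschitz-close to the torus, so that the $\Z^d$ isoperimetry transfers with a constant factor at every scale simultaneously.
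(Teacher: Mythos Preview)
Your lower bound is the paper's argument with a different test function (the paper uses $f(x)=\|x\|_\infty$), and is fine.

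For the upper bound your route differs from the paper's main proof. For general $d$ the paper does \emph{not} transfer $\Z^d$ isoperimetry through a coarse-graining into constant-size cells; instead it proves an exponential cluster-repulsion estimate (\Cref{prop:exponentialclusterrepulsionmodified}) in the style of Pete: having a connected set $A$ with $\deg(A)\ge m$ that is close, in at least $t$ $s$-boxes, to a disjoint union $B$ of large clusters lying in other components is $\le e^{-c\max(m^{1-1/d},\,t)}$-unlikely. The isoperimetric bound (\Cref{prop:largeconnsetshavelargeboundaryanyd}) then follows by deleting the vertices of $A^c$ near $\upsilon(A,A^c)$ and comparing the resulting configuration to the original via a measure change. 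A Benjamini--Mossel style argument like yours \emph{is} carried out in Section~\ref{sec:d=2isop}, but only for $d=2$ and $r_{\min}$ large, with the explicit remark that extending it to all $r>r_g$ or higher $d$ would require further work.

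There is a concrete failure in your union bound. You enumerate connected lattice animals $\tilde S$ of size $k$ --- there are $\asymp nC^k$ of them --- and want to rule out that $\partial_{\Z^d}\tilde S$ has too few good--good edges. But $|\partial_{\Z^d}\tilde S|$ can be as small as $c\,k^{(d-1)/d}$, so the best per-animal bound you get from independence of cell states is $e^{-c'k^{(d-1)/d}}$, which does not beat $nC^k$ for any $d\ge2$; the sum diverges. (The standard fix is to enumerate over the $*$-connected outer boundaries themselves, giving $nC_1^b$ against $e^{-c'b}$; that is not what you wrote.) Two further gaps: your comparisons ``$\deg(S)\le O(|S|)$'' and implicitly ``$|\tilde S|\gtrsim|S|$'' fail uniformly, since a single constant-size cell can contain $\asymp\log n/\log\log n$ PPP points; so even after the fix your isoperimetric constant acquires a polylog loss and the Morris--Peres integral yields only $n^{2/d}(\log n)^{O(1)}$, not $Cn^{2/d}$ --- the paper avoids this by working with $\deg(A)$ throughout and using the $(\log n)^{2/d}$-boxes of \Cref{lemma:sumsofdeginboxes}. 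Finally, the theorem is stated for the non-lazy walk as well, and the Morris--Peres bound applies only to the lazy chain; the paper transfers via the near-bipartiteness criteria of \cite{near_bipartiteness,LRP} by exhibiting $\gtrsim n$ vertex-disjoint triangles in $L_1(G)$, which you have not addressed.
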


The absolute relaxation time of a reversible chain is defined to be $\trel=\max\{\frac{1}{1-\lambda_2}, \frac{1}{1-|\lambda_n|}\}$, where $\lambda_2$ is the second largest eigenvalue of the transition matrix of the Markov chain and $\lambda_n$ is the smallest one. We also establish the following.

\begin{proposition}\label{prop:trelRGG} 
Let $d\geq 2$ and let $r_g<r_{\min}\le r_{\max}$. Then there exist positive constants $c$ and $C$ so that  with high probability as $n\to\infty$ the absolute relaxation time of the simple random walk and of the lazy simple random walk on the giant component $L_1(G)$ satisfies 
\[c n^{\frac2d}\le \trel\le C n^{\frac{2}{d}}.\]
\end{proposition}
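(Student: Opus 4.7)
My plan is to combine a soft upper bound coming from Theorem~\ref{trm:tmixRGG} with a matching lower bound obtained via a Poincar\'e-type test function. For the upper bound I would invoke the standard inequality $\tmix{1/4}\ge(\trelabs-1)\log 2$, valid for any reversible irreducible Markov chain; combined with the upper bound $\tmixtext\le Cn^{2/d}$ from Theorem~\ref{trm:tmixRGG}, this immediately gives, with high probability,
\[
\trelabs\;\le\;\frac{C}{\log 2}\,n^{2/d}+1,
\]
for both the simple and the lazy random walk on $L_1(G)$.

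For the lower bound I would use the variational characterisation
\[
\trel\;=\;\sup_{\vrstart{f}{\pi}>0}\frac{\vrstart{f}{\pi}}{\Dirform(f,f)}
\]
together with the explicit test function $f(x):=\cos(2\pi x_1/L)$, where $L:=n^{1/d}$ is the side-length of the torus and $x_1$ is the first Euclidean coordinate of $x$; this is well-defined on $\Lambda_n^{(d)}$ because the period of $f$ is exactly $L$. Since $f$ is $(2\pi/L)$-Lipschitz and every edge of $G$ has Euclidean length at most $r_{\max}$, each edge contributes at most $(2\pi r_{\max}/L)^2$ to the Dirichlet sum, and hence $\Dirform(f,f)\le C_1/n^{2/d}$.

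The remaining, and main technical, point is to show $\vrstart{f}{\pi}\ge c>0$ with high probability. Writing $\estart{g}{\pi}=(2|E(L_1(G))|)^{-1}\sum_{x\in L_1(G)}\deg_{L_1(G)}(x)g(x)$ and using the identity $f^2=\tfrac12+\tfrac12\cos(4\pi x_1/L)$, it suffices to show that for each of the smooth functions $g(x)=\cos(2\pi kx_1/L)$ with $k\in\{1,2\}$ we have $\estart{g}{\pi}=o(1)$ in probability. My approach would be a block argument: tile the torus into cubes of constant (or slowly growing) side length $\ell$, use Poisson concentration for the point counts together with the exponential decay of the probability that a vertex inside a sufficiently large window fails to join the giant (cf.~\cite{penrose_rgg}) to show that the $\pi$-mass of each cube concentrates around a Lebesgue-proportional constant, and then exploit the smoothness of $g$ to replace $\estart{g}{\pi}$ by $L^{-d}\int_{\Lambda_n^{(d)}}g\,dx=0$. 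This would yield $\estart{f}{\pi}\to 0$ and $\estart{f^2}{\pi}\to\tfrac12$, giving $\vrstart{f}{\pi}\ge c_2>0$. Combined with the Dirichlet bound, $\trel\ge cn^{2/d}$; since $\trelabs\ge\trel$ and the lazy Dirichlet form differs from the simple one by a factor of two while preserving the variance, the lower bound transfers to both the simple and the lazy walk. The main obstacle will be this quantitative uniform-spread statement for the giant component on scales $\gg 1$; once that is in hand the Poincar\'e inequality closes the loop and the remaining steps are routine.
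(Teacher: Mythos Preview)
Your upper bound via $\tmixtext\ge(\trelabs-1)\log 2$ together with Theorem~\ref{trm:tmixRGG} is exactly what the paper does (phrased there as ``the mixing time always upper bounds the order of the relaxation time''). For the lower bound the paper likewise uses the variational characterisation, but with the much simpler test function $f(x)=\|x\|_\infty$ rather than your cosine. With this choice one only needs that two diametrically opposite sub-boxes of side $n^{1/d}/10$ each carry $\pi$-mass bounded below by a constant; this follows almost immediately because each such sub-box contains its own giant of total degree $\Theta(n)$, and since the second-largest component of $G$ is $o(n)$ these local giants must lie inside $L_1(G)$. That gives $\Var_\pi(f)\gtrsim n^{2/d}$ directly, while $\Dirform(f)\le r_{\max}^2$ because $|f(x)-f(y)|\le r_{\max}$ along every edge. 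This sidesteps your self-identified ``main obstacle''---the quantitative uniform-spread of $\pi$ over mesoscopic boxes---entirely. Your cosine test function would also work once that spread statement is proved, and is arguably the more canonical spectral choice, but the paper's route is cheaper.

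Two smaller points. First, your phrase ``exponential decay of the probability that a vertex \ldots\ fails to join the giant'' is not literally correct (that probability is $1-\theta(r)>0$); what you actually need is concentration of the \emph{number} of giant vertices and their degrees in each block, which is a different (and true) statement. Second, beware of circularity for the simple walk: in the paper the simple-walk half of Theorem~\ref{trm:tmixRGG} is deduced \emph{from} the simple-walk half of the present proposition, via a near-bipartiteness argument (exhibiting $\gtrsim n$ disjoint triangles in $L_1(G)$ to control $\lambda_n$) combined with a mix--hit comparison. So if you are reconstructing the proofs in order, you cannot invoke the simple-walk mixing-time upper bound here as a black box; you would need to supply the $\lambda_n$ control directly.
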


This work was a product of a summer project, which lasted from June to September of 2025. Just before posting the write-up, we were made aware that the paper \cite{RGGmixing} showing the same results (for the continuous time setup) has appeared online recently, after the conclusion of our project. 

\section{Proof overview and preliminary results}

We obtain the upper bound on the mixing time, and therefore the relaxation time, by getting a bound on the isoperimetric profile defined as follows for a Markov chain with transition matrix $P$ and invariant distribution $\pi$. 

\begin{definition}\label{def:isop_prof} Let the conductance of a subset $A$ of the state space be $\Phi(A)=\frac{Q(A,A^c)}{\pi(A)}$, where for sets $A$ and $B$ we define $Q(A,B)=\sum_{x\in A, y\in B}\pi(x)P(x,y)$. The isoperimetric profile is a function $\Phi_*:[\pi_{\min}:=\min_{x}{\pi(x)}, \infty)\to \mathbb{R}$  given by 
\[\Phi_*(r)=\inf_{A:\, 0<\pi(A)\le \min\{r,\frac{1}{2}\}}\Phi(A).\vspace{-0.7cm}\]
\end{definition}

To upper bound the mixing time we use that for a lazy chain
$\tmixtext\le C \int_{4{\pi_{\min}}}^{8}\frac{du}{u\Phi^2_*(u)}$ for a positive constant $C$, which was established by Morris and Peres in \cite{evolvingsets} using the evolving sets method.

Therefore, we can bound the mixing time using this result and the following proposition, which we establish in Section~\ref{sec:isopbounds} and which is one of the main technical contributions of this work. 

\begin{proposition}\label{prop:largeconnsetshavelargeboundaryanyd}
In the setup of Theorem~\ref{trm:tmixRGG}, there exist constants $C$ and $c$ such that the following holds with high probability.
For all $A\subset L_1(G)$ with both $A$ and $L_1(G)\setminus A$ connected and with $\frac12\ge\pi(A)\ge \frac{C(\log n)^{2d^2}}{n}$
the number of edges between \(A\) and \(A^c\) is at least $c(\deg(A))^{1-\frac{1}{d}}$.
\end{proposition}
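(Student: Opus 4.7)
The plan is to deduce the discrete isoperimetric inequality on $L_1(G)$ from the classical Euclidean isoperimetric inequality on the torus, via a renormalisation at a logarithmic block scale, exploiting the connectedness of $A$ and of $L_1(G)\setminus A$ to reduce to nicely-shaped continuum regions.

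First I would tile $\Lambda_n^{(d)}$ by axis-aligned cubes $\{B_\alpha\}$ of a small constant side $\ell>0$ chosen so that $2\sqrt{d}\,\ell<r_{\min}$; with this choice any two PPP points sitting in the same cube or in two $\ell_\infty$-adjacent cubes are at Euclidean distance at most $r_{\min}$ and are therefore joined by an edge of $G$. At a coarser scale $L\asymp(\log n)^{1/d}$, a Chernoff bound for Poisson random variables together with a union bound over the $\Theta(n/\log n)$ super-cubes of side $L$ gives with high probability that every such super-cube contains $\Theta(L^d)$ PPP points; an upper-tail Chernoff likewise yields that every small $\ell$-cube contains at most $K\log n$ points. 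Denote this high-probability event by $\mathcal G$ and work on it.

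Fix $A\subset L_1(G)$ as in the statement. Classify each super-cube $S$ as an ``$A$-block'' if $|S\cap A|\ge L^d/2$ and as an ``$A^c$-block'' otherwise, and let $U_A$ be the union of all $A$-blocks. On $\mathcal G$ one checks $\mathrm{vol}(U_A)\asymp|A|$ and $\mathrm{vol}(U_A^c)\asymp|L_1(G)\setminus A|$. Because $A$ and $L_1(G)\setminus A$ are each connected in $G$ and every edge has reach at most $r_{\max}$, $U_A$ and its complement are, up to negligible boundary adjustments, connected subregions of $\Lambda_n^{(d)}$. The classical isoperimetric inequality on the torus then produces
\[
\mathrm{vol}_{d-1}(\partial U_A)\ \ge\ c\,\min\bigl\{\mathrm{vol}(U_A),\,n-\mathrm{vol}(U_A)\bigr\}^{1-\frac{1}{d}}.
\]
The boundary $\partial U_A$ decomposes into $(d-1)$-dimensional faces of size $L^{d-1}$, each separating an $A$-block from an $A^c$-block; using that both adjacent blocks have $\Theta(L^d)$ points and that points on either side of the face lie at distance $\le r_{\min}$ from the other side (by our choice of $\ell$), each such face contributes at least one edge to $\partial_E A$ with one endpoint in $A$ and the other in $A^c$. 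Summing over faces gives $|\partial_E A|\gtrsim \mathrm{vol}_{d-1}(\partial U_A)/L^{d-1}\gtrsim |A|^{1-1/d}/(\log n)^{(d-1)/d}$.

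The main obstacle is then closing the polylogarithmic gap. Converting $|A|$ to $\deg(A)$ through the crude cube bound $\deg v\le K'\log n$ would only yield $|\partial_E A|\gtrsim(\deg A)^{1-1/d}/(\log n)^{O(1)}$, falling short of the claim. The threshold $(\log n)^{2d^2}$ in the hypothesis is designed precisely to absorb this loss: with $|A|$ that large one can upgrade the uniform degree bound to the bulk statement $\deg(A)\asymp|A|$, valid simultaneously for all sufficiently large $A$, by a Poisson concentration argument applied cube-by-cube to $\sum_{v\in A}\deg v$ over a short cascade of scales (each scale paying only a single logarithmic factor, which is how the exponent $2d^2$ arises). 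Combined with a sharper face-to-edge transfer that uses the $\Theta(L^d)$ candidate edges per boundary face, this should eliminate the polylog and yield the claimed $|\partial_E A|\ge c(\deg A)^{1-1/d}$.
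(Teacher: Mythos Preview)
Your approach has a genuine gap at the face-to-edge transfer step. You claim that each boundary face between an $A$-block $S_1$ and an $A^c$-block $S_2$ yields an edge in $\partial_E A$, justified by ``points on either side of the face lie at distance $\le r_{\min}$ from the other side (by our choice of $\ell$)''. But this conflates two scales: the choice of $\ell$ joins points in adjacent $\ell$-cubes, not in adjacent $L$-super-cubes. The majority condition only places $A$-points \emph{somewhere} in $S_1$ and $A^c$-points \emph{somewhere} in $S_2$; nothing prevents all the $A$-points of $S_1$ from sitting far from the shared face, with only $A^c$-points of $S_1$ (and, on the other side, only $A$-points of $S_2$) near it, so that every edge actually crossing the face is $A$--$A$ or $A^c$--$A^c$. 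Since $A$ is chosen adversarially after the graph is revealed and there are exponentially many choices of $A$, you cannot rule this out probabilistically. More broadly, $L_1(G)$ restricted to $S_1\cup S_2$ need not be connected, so there is no local mechanism forcing a $\partial_E A$ edge anywhere near a given coarse boundary face. Your closing remarks about a ``sharper face-to-edge transfer'' and a ``cascade of scales'' do not supply this missing mechanism; the obstruction is not polylog bookkeeping but the absence of any link between coarse-scale block boundary and actual boundary edges of $A$.

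The paper's proof is built precisely to circumvent this difficulty and proceeds along a completely different line. Rather than renormalising $A$, it establishes an exponential cluster repulsion estimate (Proposition~\ref{prop:exponentialclusterrepulsionmodified}): the probability that some connected $A$ with $\deg A\ge m$ lies in a different cluster from a union $B$ of large components with $\deg B\ge m$, while $A$ and $B$ are close in at least $t$ of the $s$-boxes, is at most $e^{-c\max(m^{1-1/d},\,t)}$. This is proved by showing that such $A,B$ force a large set of ``bad'' constant-scale blocks (blocks lacking a unique dominant cluster reaching all faces) with few $*$-connected components, which is exponentially unlikely by a Peierls-type count. The isoperimetric bound then follows via a change-of-measure argument: if some $A$ had too few boundary edges, deleting the handful of $A^c$-vertices near $A$ would manufacture exactly such a repelled pair $(\tilde A,B)$, and the Poisson cost of the deletion is dominated by the exponential gain from repulsion.
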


The most challenging part of this work is obtaining the above bound.
Our proof is motivated by the work of Pete \cite{pete_exp_cluster_rep}, who obtains analogous results for supercritical percolation by first showing exponential cluster repulsion, i.e.\ that the probability that supercritical percolation in $\mathbb{Z}^d$ has a cluster $A$ of size $m$ containing zero which is connected by at least $t$ closed edges to another cluster $B$ of size at least $m$ is at most $\exp(-c\max(m^{1-\frac{1}{d}},t))$ for a suitable constant $c$.
This is obtained by exhibiting from such $A$ and $B$ a large connected set of blocks which are bad, where a block is a box of side length $\frac{3L}{2}$ with centre in $(L\Z)^d$, and a block is bad if the restriction of the percolation to it has zero or at least two suitably big clusters, or one suitably big cluster that does not reach all the $(d-1)$-dimensional faces of the block.
It is not hard to prove that when $L$ is a large enough constant, having a connected set of bad blocks near zero is exponentially unlikely in the size of the set, and the repulsion result follows by considering the blocks of a suitably defined boundary of $A$ together with the blocks suitably intersecting both $A$ and $B$.
Upon showing the exponential cluster repulsion, the statement analogous to \Cref{prop:largeconnsetshavelargeboundaryanyd} about the boundary of subsets in the giant follows by, for each $t$, considering connected sets $A$ and $A^c$ which have a total of $t$ edges between them, and closing off the open ones among these.
This gives two large clusters with at least $t$ closed edges connecting them, and a suitable computation which accounts for the exponential cost of closing the edges, but also that it is exponentially unlikely to have such two clusters, gives the result.
Our proof aims to show the analogous, but unlike in the percolation setup, we do not know that we only have points in each integer coordinate, and all of the points which are close enough are connected, and so we can not have clusters with closed edges between them.
Instead, we aim to delete the vertices in $A^c$ which are connected to $A$ by a direct edge and to think of $t$ as the number of boxes in a grid of side length between $r$ and $2r$ which have a vertex of $A$ and, in itself or a suitably nearby box, a vertex of $A^c$.
If the boundary between $A$ and $A^c$ is not large, after this deletion, we are still left with at least a constant proportion of vertices in $A^c$, but we could end up with many connected components of $A^c$, while in the case of percolation, after closing the edges $A^c$ stays connected.
This means that we need to upgrade the cluster repulsion result to a case of one cluster $A$, which is suitably close to possibly many different clusters whose union is some large enough set $B$.
This is done in~\Cref{prop:exponentialclusterrepulsionmodified}, where such a result can be obtained only when all of the clusters forming $B$ are suitably large.
Similarly to the percolation case, we aim to obtain this result by finding a large set of bad blocks, which in our case does not need to be connected, but instead has an upper bound on the number of connected components.
Additional difficulty in our work also arises from the fact that having a large connected component does not necessarily have to imply that the number of small boxes containing vertices of this set has to be of the same order, as we can have up to $\frac{\log n}{\log\log n}$ vertices in a set of constant volume. 

Furthermore, since we are not in the bounded degree setting, results like \Cref{prop:largeconnsetshavelargeboundaryanyd} and \Cref{prop:exponentialclusterrepulsionmodified} do not immediately follow from the corresponding statements with degree replaced by number of vertices.

In the case of two dimensions, in Section~\ref{sec:d=2isop} we also give a different and neater, but less general proof, for which we require the radius $r$ to be large enough and which is motivated by Benjamini and Mossel~\cite{benjamini_percolation}. A suitable renormalisation argument might lead to a proof for any $r$ and possibly higher dimensions, but we did not explore this direction further.

To prove the lower bound on the mixing time, we show that the relaxation time is of order $n^{\frac2d}$.

From the variational characterisation of the spectral gap, we have
\[
\trel\ge \frac{1}{1-\lambda_2}=\sup_{f\text{ is non-constant}}\frac{\Var_{\pi}(f)}{\mathcal{E}(f)},
\]
where $\mathcal{E}(f)=\frac{1}{2}\sum_{x,y}\pi(x)P(x,y)(f(x)-f(y))^2$ is the Dirichlet form of $f$. We will thus get a lower bound by just picking a suitable function $f$.

Note that the upper bound on the mixing time in terms of the isoperimetric profile requires the chain to be lazy. Therefore, we will first show all of our results for the lazy version of the chain. We will then use recent results by Hermon, Langer and Malmquist, which imply that the mixing time of a random walk on a graph is of the same order as the mixing time of the lazy version as long as the graph is not close to being bipartite in a certain sense.

Notation: For functions $a,b:\Znonneg\to[0,\infty)$ we write $a(n)\lesssim b(n)$ if there exists a constant $C>0$ such that for all sufficiently large $n$ we have $a(n)\le Cb(n)$, and write $a(n)\ll b(n)$ if for any constant $c>0$, for all sufficiently large $n$ (in terms of $c$) we have $a(n)\le cb(n)$. We define $\gtrsim$ and $\gg$ analogously. We write $a(n)\asymp b(n)$ if we have $a(n)\lesssim b(n)\lesssim a(n)$. Unless specified otherwise, these relations are considered as $n\to\infty$.

We end this section by stating two useful lemmas which control the degrees of the giant component of RGG. The first one upper bounds the sum of degrees of RGG in boxes of side length $(\log n)^{\frac2d}$, and follows using \cite[Proposition~7.1]{rgg_too_many_edges}. Similar statements have been proved in previous works, but for completeness, we present the proof in Appendix~\ref{appendix:degreebounds}. 

\begin{lemma}\label{lemma:sumsofdeginboxes} If we cover \(\Lambda_n^{(d)}\) by \(2^d n\) boxes of side length \((\log n)^{\frac{2}{d}}\) (\(n\ge 4\)), with high probability the sum of the degrees of the vertices in each box is less than \(2\gamma_d r^d (\log n)^2\), where  $\gamma_d$ is the volume of the unit ball in $d$ dimensions.
\end{lemma}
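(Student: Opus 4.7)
My plan is to obtain strong concentration for the sum of degrees in each single box and then take a union bound over the $2^d n$ boxes. Fix one such box $B$ of side length $L := (\log n)^{2/d}$, and write
\[
S_B := \sum_{x \in V_n \cap B} \deg(x) = \#\{(x,y) \in V_n \times V_n : x \ne y,\ x \in B,\ \|x-y\|_2 \le r\}.
\]
Note that $S_B$ is determined by the restriction of the PPP to the $r$-enlargement $B^+$ of $B$, which has volume $(L+2r)^d = (1+o(1))(\log n)^2$ since $r$ is a constant. The Campbell--Mecke formula for the unit-intensity PPP immediately gives $\E{S_B} = \int_B \gamma_d r^d \, dx = \gamma_d r^d (\log n)^2$.

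Next I would apply \cite[Proposition~7.1]{rgg_too_many_edges}, which is tailor-made to control the number of $r$-close pairs of PPP points inside a bounded region. Applied to $B^+$, it yields
\[
\pr{S_B > 2\gamma_d r^d (\log n)^2} \le \exp\!\left(-c(\log n)^2\right)
\]
for some constant $c>0$ depending only on $d$ and $r_{\max}$.

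Since $\exp(-c(\log n)^2) = n^{-c\log n}$ decays faster than any fixed polynomial in $n$, a union bound over the $2^d n$ boxes gives the conclusion with probability $1-o(1)$.

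The main obstacle is the concentration statement itself: $S_B$ is a second-order Poisson $U$-statistic whose sensitivity to the insertion of a new point equals the degree of that point, which is not deterministically bounded but only Poisson-concentrated. The result of \cite{rgg_too_many_edges} handles this by first conditioning on the favourable event that $|V_n \cap B^+|$ is at most, say, twice its mean (which by Poisson concentration fails with probability $\exp(-\Omega((\log n)^2))$), and then applying a Bernstein-type inequality to the resulting bounded $U$-statistic; absent direct use of the reference, one can reprove it along exactly these lines.
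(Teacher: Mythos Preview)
Your approach is essentially the paper's: bound $S_B$ by twice the edge count of the RGG on an inflated box, invoke \cite[Proposition~7.1]{rgg_too_many_edges} for the upper tail of that edge count, and union-bound over the $2^d n$ boxes. One correction: the cited result gives the asymptotic
\[
\log \pr{|E| > (1+\delta)\E{|E|}}\;\sim\; -\sqrt{\tfrac{\delta}{2}}\,\sqrt{\E{|E|}}\,\log\E{|V|},
\]
so on a region of volume $\asymp(\log n)^2$ the tail probability is $\exp\!\big(-\Theta(\log n\,\log\log n)\big)$, not $\exp(-c(\log n)^2)$ as you write; the upper tail of this second-order Poisson $U$-statistic is governed by planting $\Theta(\sqrt{\E{|E|}})$ points in a single $r$-ball, not by Gaussian fluctuations. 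The weaker rate $n^{-c\log\log n}$ still beats the $2^d n$ boxes, so your conclusion stands, but your fallback sketch (condition on $|V_n\cap B^+|\le 2\E{|V_n\cap B^+|}$ and apply a Bernstein-type inequality to the bounded $U$-statistic) would not by itself recover even this rate without substantially more work.
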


The giant component \(L_1(G)\) is with high probability the unique component of \(G\) with \(|L_1(G)|=\Theta(n)\). We will also use \cite[Lemma 2.10]{tcov_rgg}, which states the following.

\begin{lemma}\label{lemma:ordernedges} There exist constants \(\til c\) and \(\til C\) such that, with high probability, \(\til c n \le |E(L_1(G))| \le \til C n\). 
\end{lemma}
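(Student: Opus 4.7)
The plan is to prove the two bounds independently. For the upper bound, since $E(L_1(G))\subseteq E(G)$, it suffices to show $|E(G)|\le \til C n$ with high probability. By the Mecke formula applied to the Poisson point process of intensity $1$,
\[
\mathbb{E}[|E(G)|]=\tfrac{1}{2}\int_{\Lambda_n^{(d)}}\int_{\Lambda_n^{(d)}}\mathbf{1}\{\|x-y\|_2\le r\}\,dy\,dx=\tfrac{n\gamma_d r^d}{2}\le \tfrac{n\gamma_d r_{\max}^d}{2},
\]
which is $\Theta(n)$. A standard second moment computation for Poisson $U$-statistics, again via Mecke, gives $\mathrm{Var}(|E(G)|)=O(n)$ (the dominant contribution comes from pairs of edges sharing a vertex, and this is controlled by the boundedness of $r$). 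Chebyshev's inequality then yields $|E(G)|\le \til C n$ with high probability for any $\til C>\gamma_d r_{\max}^d/2$. As an even simpler alternative, one can combine \Cref{lemma:sumsofdeginboxes} with the fact that the giant fits inside the $2^d n$ boxes of side $(\log n)^{2/d}$ to get the cruder bound $|E(G)|\le n(\log n)^2$, which, after sharpening by Chebyshev on the contribution of typical boxes, also delivers the required linear bound.

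For the lower bound, we invoke the supercritical giant-component estimate of Penrose \cite{penrose_rgg} referenced in the introduction: since $r\ge r_{\min}>r_g$, with high probability $|V(L_1(G))|\ge c'n$ for some constant $c'>0$. Because $L_1(G)$ is connected by definition, it contains a spanning tree, and so
\[
|E(L_1(G))|\ge |V(L_1(G))|-1 \ge c'n-1\ge \til c n
\]
for any $\til c<c'$ and $n$ sufficiently large.

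The only genuinely non-trivial ingredient is the concentration of $|E(G)|$ around its mean, but this reduces to a routine variance bound for a degree-$2$ Poisson $U$-statistic with a bounded, bounded-support kernel; no structural information about $L_1(G)$ beyond its linear volume is needed for either direction. The expected main obstacle — showing that the giant carries $\Theta(n)$ of the edges rather than a vanishing fraction — is sidestepped entirely by the trivial tree lower bound combined with the $\Theta(n)$ size of $V(L_1(G))$.
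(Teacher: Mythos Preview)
Your argument is correct. The paper does not actually prove this lemma; it simply imports it as \cite[Lemma~2.10]{tcov_rgg}. Your route is therefore different by default: you give a self-contained proof where the paper cites one.

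Both of your main steps are sound. For the upper bound, the Mecke computation $\E{|E(G)|}=\tfrac12 n\gamma_d r^d$ is correct on the torus (the ball of radius $r\le r_{\max}$ does not wrap for large $n$), and the variance of this order-$2$ Poisson $U$-statistic is indeed $O(n)$ --- the two contributions are the mean itself and $\int\big(\int \mathbf 1\{\|x-y\|\le r\}\,dy\big)^2 dx=n(\gamma_d r^d)^2$ --- so Chebyshev gives $|E(G)|\le\til C n$ w.h.p. For the lower bound, the spanning-tree observation $|E(L_1(G))|\ge |V(L_1(G))|-1$ combined with $|V(L_1(G))|\ge c'n$ w.h.p.\ (which is exactly the fact cited in the introduction from \cite{penrose_rgg}) is clean and sufficient.

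One remark: drop the ``simpler alternative'' paragraph. Invoking \Cref{lemma:sumsofdeginboxes} over $2^d n$ boxes of side $(\log n)^{2/d}$ yields only $|E(G)|\lesssim n(\log n)^4$, not $n(\log n)^2$ (there are $\asymp n/(\log n)^2$ disjoint such boxes, each contributing at most $O((\log n)^2)$ to the degree sum, but the $2^d n$ overlapping boxes in the statement overcount), and the vague ``sharpening by Chebyshev on typical boxes'' does not constitute a proof. Your primary Mecke/Chebyshev argument already does the job.
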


\section{Isoperimetric bound for \texorpdfstring{$d\ge 2$}{d ge 2}}\label{sec:isopbounds}

The proof presented in this section is motivated by the work of Pete~\cite{pete_exp_cluster_rep}. 

\begin{definition}[Boxes and blocks]~\label{def:boxes_blocks} Let \(L\in\mathbb N\). Suppose \(n\ge(r_{\max}L)^d\), and consider a regular grid \(s\Z^d/{\sim}\) on \(\Lambda_n^{(d)}\) with \(s\in\left[r(n),2r(n)\right]\) and \(\frac{\sqrt[d]n}{Ls}\in \mathbb N\).
Call the cells of this grid \emph{$s$-boxes} and define \emph{blocks}
\[B_{\frac{3Ls}{4}}(Lx)=\left\{y\in \Lambda_n^{(d)} \,\,\colon\, ||y-Lx||_\infty \le \frac{3Ls}{4}\right\},\]
for \(x\in s\Z^d/{\sim}\).
Say blocks \(B_{\frac{3Ls}{4}}(Lx)\) and \(B_{\frac{3Ls}{4}}(Ly)\) are \emph{adjacent} if \(||\frac xs-\frac ys||_1 \le 1\) and \emph{*-adjacent} if \(||\frac xs-\frac ys||_\infty \le 1\).
For example, the highlighted blocks in \Cref{fig:boxesAndBlocks} are not adjacent but *-adjacent.
\end{definition}

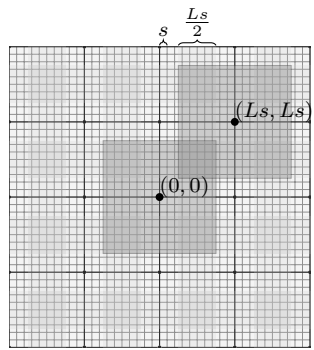
\begin{figure}[htb]
\centering
\begin{tikzpicture}[node font=\scriptsize]
    \begin{scope}
    \clip (0,0) rectangle (4,-4);

    \coordinate (B0) at (2,-2);
    \coordinate (B1) at (3,-1);
    
    \draw[help lines, step=0.1, line width=0.01pt] (0,0) grid (4,-4);
    \graph[simple, empty nodes, nodes={circle, draw, fill=black, inner sep=0pt, minimum size=1pt}]{
        {[grid placement] subgraph Grid_n[n=25]};
    };
    \foreach \x in {-1,...,5}{
        \foreach \y in {1,...,-5}{
            \fill[gray!40!white,opacity=0.2] ($(\x,\y)+(-0.75,-0.75)$) rectangle ++(1.5,1.5);
        }
    }

    \draw[gray,draw=black,fill=gray,opacity=0.3] ($(B0)+(-0.75,-0.75)$) rectangle ++(1.5,1.5);
    \draw[gray,draw=black,fill=gray,opacity=0.3] ($(B1)+(-0.75,-0.75)$) rectangle ++(1.5,1.5);

    \end{scope}
    \fill (2,-2) circle (1.5pt) node[above right=-0.15]{$(0,0)$};
    \fill (3,-1) circle (1.5pt) node[above right=-0.15]{$(Ls,Ls)$};
    \draw[decoration={brace,raise=0.5pt,amplitude=2}, decorate] (2,0) -- node[above=1pt]{$s$} (2.1,0);
    \draw[decoration={brace,raise=0.5pt,amplitude=2}, decorate] (2.25,0) -- node[above=1pt]{$\frac{Ls}{2}$} (2.75,0);

    \path (-0.2,0) -- (4.2,0); 
\end{tikzpicture}%
\caption{Illustration of $s$-boxes and blocks on \(\Lambda_n^{(d)}\). Here \(d=2\), \(L=10\) and \(\frac{\sqrt[d]n}{Ls}=4\).}
\label{fig:boxesAndBlocks}
\end{figure}

\begin{definition}[Good blocks]~\label{def:good_blocks} Call a block \(B\) \emph{good} if
\begin{itemize}
    \item it has a cluster with vertices within \(r\) of all its \((d-1)\)-dimensional faces, and
    \item all its other clusters have \(\ell_\infty\)-diameter \(<\frac{Ls}5\).
\end{itemize}
If the block is not good, we call it \emph{bad}.
\end{definition}

We start by giving some simple properties of sets of bad blocks. In particular, by comparison to Bernoulli site percolation, we upper bound the probability of the existence of a large set of bad blocks which has a sufficiently small number (in terms of its size) of *-connected components.

\begin{claim}\label{claim:closed_blocks} For any \(d\ge2\), for any $p\in (0,1)$, there is a large enough constant $C_p$ such that for all $L\ge C_p$ the set of good blocks stochastically dominates a Bernoulli site percolation of the blocks with parameter $p$. Therefore, for $L\ge C_p$ the probability that in a fixed set of $k$ blocks, all of the blocks are bad is at most $(1-p)^k$. 
\end{claim}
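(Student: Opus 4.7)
The plan is standard: establish finite-range dependence of the goodness field, push the single-block marginal arbitrarily close to $1$, and then apply the Liggett--Schonmann--Stacey (LSS) stochastic-domination theorem.

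\textbf{Step 1 (Finite-range dependence).} First I would observe that whether a block $B_{\frac{3Ls}{4}}(Lx)$ is good is measurable with respect to $V_n\cap B_{\frac{3Ls}{4}}(Lx)$ only, since the clusters appearing in \Cref{def:good_blocks} are clusters of the RGG induced by the PPP \emph{inside} the block. Two blocks $B_{\frac{3Ls}{4}}(Lsk)$ and $B_{\frac{3Ls}{4}}(Lsk')$ are disjoint whenever $\|k-k'\|_\infty\ge 2$, so by the independence property of the PPP on disjoint sets the indicator field
\[
\eta_k:=\1{B_{\frac{3Ls}{4}}(Lsk)\text{ is good}}
\]
is a stationary site process on the block lattice that is $1$-dependent in the $\ell_\infty$ metric.

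\textbf{Step 2 (Marginal close to $1$).} I would then show that for every $p^\star<1$ there exists $L_0=L_0(p^\star,d,r_{\min},r_{\max})$ such that $\pr{\eta_0=1}\ge p^\star$ for all $L\ge L_0$. Since $r\in[r_{\min},r_{\max}]$ with $r_{\min}>r_g$, the RGG restricted to the block is a supercritical RGG on a macroscopic box of side length $\asymp L$, and two standard supercritical statements are needed:
\begin{itemize}
\item (\emph{Crossing/uniqueness}) with probability tending to $1$ as $L\to\infty$, there is a cluster inside the block whose vertices come within $r$ of each of the $2d$ faces;
\item (\emph{No mesoscopic secondary clusters}) with probability tending to $1$ as $L\to\infty$, every other cluster inside the block has $\ell_\infty$-diameter less than $Ls/5$.
\end{itemize}
Both facts can be obtained either by appealing to known supercritical RGG estimates (as in Penrose's monograph \cite{penrose_rgg}), or by a Peierls-type renormalisation argument on the PPP directly analogous to the Bernoulli argument of Pete \cite{pete_exp_cluster_rep}.

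\textbf{Step 3 (LSS comparison and bad-block tail).} Finally I would invoke the Liggett--Schonmann--Stacey theorem: for every $p\in(0,1)$ and every fixed finite dependence range (here range $1$ in $\ell_\infty$) there exists $p^\star=p^\star(p,d)<1$ such that any stationary $1$-dependent site percolation on $\mathbb Z^d$ with one-site marginal at least $p^\star$ stochastically dominates Bernoulli$(p)$ site percolation. Taking this $p^\star$ in Step 2 and setting $C_p:=L_0(p^\star)$ gives the required domination of good blocks over Bernoulli$(p)$ for all $L\ge C_p$. Complementing, the bad blocks are dominated by an i.i.d.\ $\mathrm{Ber}(1-p)$ field, whence
\[
\pr{\text{all $k$ prescribed blocks are bad}}\le(1-p)^k.
\]

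\textbf{Main obstacle.} Steps 1 and 3 are bookkeeping. The real work is Step 2, and in particular the crossing/uniqueness statement together with the absence of mesoscopic secondary clusters in a macroscopic box of supercritical RGG. These are morally the supercritical percolation facts used in \cite{pete_exp_cluster_rep}, but as the overview already notes, the RGG setting adds extra care: a large cluster need not occupy many small boxes, and one must be precise about ``restriction of the RGG to the block'' for the PPP.
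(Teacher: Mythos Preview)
Your proposal is correct and matches the paper's proof almost exactly: the paper also uses finite-range dependence of the goodness indicators (non-*-adjacent blocks are good independently), cites external references (\cite[Lemma 3.1]{tcov_rgg} or \cite[Theorem 2]{penrose_pisztora_large_dev_cts_perc}) for the single-block marginal tending to $1$ as $L\to\infty$, and then applies the Liggett--Schonmann--Stacey comparison (via \cite[Theorem 7.65]{grimmett_percolation}). Your Step 2 concern is legitimate but is exactly what those cited results supply, so there is no additional work beyond invoking them.
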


\begin{proof} From~\cite[Lemma 3.1]{tcov_rgg} (or~\cite[Theorem~2]{penrose_pisztora_large_dev_cts_perc}) it follows that  the probability that a block is good goes to $1$ as \(L\to\infty\). As non-*-adjacent blocks are good independently,
using~\cite[Theorem~7.65]{grimmett_percolation} we may, for any \(p\in(0,1)\), take \(L\) sufficiently large that the set of good blocks stochastically dominates Bernoulli site percolation with parameter \(p\).
\end{proof}

\begin{claim}\label{claim:numberofsizeksetswithnottoomanystarcomps}
    For any \(d\ge2\), there exists a constant \(C\) such that the following holds for all \(n\) and \(k\).
    A regular \(d\)-dimensional toroidal grid with \(\le n\) vertices
    has \(\le C^k\) subsets of size \(k\) with \(\le \frac{k}{\log n}\) *-connected components.
\end{claim}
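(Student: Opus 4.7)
The plan is to classify subsets by their number of *-connected components $j$ and the sizes of those components, and use a lattice-animal style bound to count the *-connected shapes. Since every vertex in the *-adjacency graph of a $d$-dimensional toroidal grid has at most $3^d-1$ neighbours, a standard counting argument gives a constant $\alpha=\alpha(d)$ such that the number of *-connected subsets of size $s$ containing any prescribed vertex is at most $\alpha^s$.

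With this tool, I would fix $1\le j\le k/\log n$ together with sizes $s_1,\dots,s_j\ge 1$ summing to $k$, and upper bound the number of subsets of size $k$ whose *-connected components have exactly these ordered sizes by: first, choosing an ordered tuple of $j$ anchor vertices (one per component) in at most $n^j$ ways, and second, choosing the *-connected shape containing the $i$-th anchor with the prescribed size $s_i$, contributing at most $\prod_i \alpha^{s_i}=\alpha^k$. Since the number of ordered compositions of $k$ into $j$ positive parts is at most $\binom{k-1}{j-1}\le 2^k$, this gives at most $n^j(2\alpha)^k$ such subsets for each $j$.

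Summing over $j=1,\dots,\lfloor k/\log n\rfloor$, the key identity $n^{1/\log n}=e$ gives $n^j\le n^{k/\log n}=e^k$ for every $j$ in the sum, so the total count is at most $(k/\log n)(2e\alpha)^k\le C^k$ with $C=4e\alpha(d)$, after absorbing the prefactor via $k\le 2^k$. The case $k<\log n$ is vacuous since then $k/\log n<1$ forces the number of components to be zero. I do not expect any real obstacle here: the only non-elementary ingredient is the lattice-animal bound, and the hypothesis $j\le k/\log n$ is precisely what is needed to collapse the $n^j$ anchor factor into something exponential in $k$.
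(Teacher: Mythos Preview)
Your proposal is correct and follows essentially the same approach as the paper: both choose a number of components, anchor vertices, component sizes summing to $k$, and then invoke the lattice-animal/Peierls bound $\alpha^{s_i}$ for each *-connected piece, using $n^{j}\le n^{k/\log n}=e^{k}$ to absorb the anchor cost. The only cosmetic differences are that the paper picks unordered anchor sets via $\binom{n}{r}$ while you use ordered tuples via $n^{j}$, and you are slightly more explicit about the vacuous case $k<\log n$.
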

The proof of this claim is a simple counting, so we defer it to Appendix~\ref{appendix:countingclaim}.

\begin{lemma}\label{lemma:bigbadwithfew*conncompsunlikely}
    For any \(d\ge 2\), for sufficiently large \(L\), \(n\ge(r_{\max}L)^d\) and \(k\ge0\),
    \begin{equation}
    \label{eq:bigbadwithfew*conncompsunlikely}
    \pr{\exists\, \text{ set of bad blocks on }\Lambda_n^{(d)}\text{ of size } k \text{ with }\le \frac{k}{\log n} \text{ *-connected components}} \le \frac{1}{2^k}.
\end{equation}
\end{lemma}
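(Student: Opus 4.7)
The plan is to combine the two claims just stated via a straightforward union bound. Fix the constant $C$ supplied by \Cref{claim:numberofsizeksetswithnottoomanystarcomps}, so that the underlying grid of block centres (which has at most $n/(Ls)^d\le n$ vertices) contains at most $C^k$ subsets of size $k$ with at most $k/\log n$ \mbox{*-connected} components.

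Next, pick $p\in(0,1)$ close enough to $1$ that $C(1-p)\le \tfrac12$ (e.g.\ $p=1-\tfrac1{2C}$), and then choose $L$ at least as large as the constant $C_p$ from \Cref{claim:closed_blocks}. By that claim, the indicators of good blocks stochastically dominate independent $\Ber{p}$ random variables, so for any fixed collection of $k$ blocks the probability that all of them are bad is at most $(1-p)^k$ (the corresponding probability under independent $\Ber{p}$ site percolation that all $k$ are closed).

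A union bound over the collections enumerated by \Cref{claim:numberofsizeksetswithnottoomanystarcomps} then gives
\[
\pr{\exists\text{ set of bad blocks of size }k\text{ with }\le \tfrac{k}{\log n}\text{ *-connected components}}\le C^k(1-p)^k\le 2^{-k},
\]
which is exactly \eqref{eq:bigbadwithfew*conncompsunlikely}.

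There is essentially no obstacle here: the only care needed is to verify that the counting in \Cref{claim:numberofsizeksetswithnottoomanystarcomps} applies to the block grid (which it does, since the grid of block centres is a toroidal regular $d$-dimensional grid with at most $n$ vertices) and to choose the parameters $p$ and $L$ in the correct order, first $p$ in terms of $C$, and then $L$ in terms of $p$.
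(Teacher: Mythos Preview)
Your proposal is correct and essentially identical to the paper's own proof: both fix $C$ from \Cref{claim:numberofsizeksetswithnottoomanystarcomps}, set $p=1-\tfrac{1}{2C}$, take $L\ge C_p$ via \Cref{claim:closed_blocks}, and apply a union bound to obtain $C^k(1-p)^k=2^{-k}$. The only cosmetic addition in the paper is that it also requires $L\ge 1/r_{\min}$, which is what justifies your parenthetical assertion that the block grid has at most $n/(Ls)^d\le n$ vertices.
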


\begin{proof}
Fix \(d\ge2\).
Let \(p=1-\frac{1}{2C}\), with \(C\) from \Cref{claim:numberofsizeksetswithnottoomanystarcomps}.
Take \(L\ge \max(\frac{1}{r_{min}},\, C_p)\), with \(C_p\) from \Cref{claim:closed_blocks}.
Then by \Cref{claim:closed_blocks} the probability that a fixed set of \(k\) blocks is a set of bad blocks is at most \(\left(\frac{1}{2C}\right)^k\),
and by \Cref{claim:numberofsizeksetswithnottoomanystarcomps} there are \(\le C^k\) sets of \(k\) blocks on \(\Lambda_n^{(d)}\) with \(\le \frac{k}{\log n}\) *-connected components.
So a union bound gives, for any \(n\ge(r_{\max}L)^d\) and \(k\),
\begin{equation*}
    \label{eq:bigBadUnlikely}
   \pr{\exists\, \text{ set of bad blocks on }\Lambda_n^{(d)}\text{ of size } k \text{ with }\le \frac{k}{\log n} \text{ *-connected components}}\le C^k\left(\frac{1}{2C}\right)^k = \frac{1}{2^k},
\end{equation*}
as desired. \end{proof}

\begin{definition}[Substantial blocks]\label{def:substantial_blocks}Given any subset \(S\) of \(G\), call a block \emph{\(S\)-substantial} if
\begin{itemize}
    \item it has a cluster of \(S\) of \(\ell_\infty\)-diameter \(\ge\frac{Ls}5\).
\end{itemize}
Write \(S^L\) for the set of \(S\)-substantial blocks.
\end{definition}

We note that if \(S\) is connected, then \(S^L\) is connected.

\begin{definition}[Vertex boundaries]\label{def:vertex_boundaries} For a subset $S$ of $G$, define the inner vertex boundary of $S^L$, $\operatorname{\partial_{iV}}{\!\left(S^L\right)}$, to be the set of blocks that are $S$-substantial but have an adjacent block that is not $S$-substantial, i.e.\ \(\operatorname{\partial_{iV}}{\!\left(S^L\right)}=\{B\in S^L \colon \exists \til B \in (S^L)^c \,\text{ s.t. }B \text{ is adjacent to }\til B\}\). Similarly, define the outer vertex boundary to be the set of blocks which are in $(S^L)^c$ but have an adjacent block in $S^L$. 

For subsets $S$ and $T$ of $G$, define the inner vertex boundary of \(S^L\) towards \(T^L\), \(\operatorname{\partial_{iV}^{T^L}}{\!\left(S^L\right)}\), as follows: 
Let \(W_1,\,\dots,\,W_m\) be the *-connected components of \(\left(S^L\right)^c\) that contain or have in their outer vertex boundary at least one block of \(T^L\).
Then \(\operatorname{\partial_{iV}^{T^L}}{\left(S^L\right)} := \bigcup_{j=1}^m \partial_{oV}(W_i)\).
\end{definition}

It is easy to see that $\operatorname{\partial_{iV}}{\!\left(S^L\right)}$ consists of bad blocks whenever \(L\ge\frac{10}{3}\), and so does \(S^L \cap T^L\) when $S$ and $T$ belong to different clusters.
Clearly, \(\operatorname{\partial_{iV}^{T^L}}{\left(S^L\right)} \subset \operatorname{\partial_{iV}}{\!\left(S^L\right)}\).

\begin{definition}[Boundary boxes between two sets]\label{def:boundarysboxes} For any sets $S$ and $T$, write \(\tau(S,\,T)\) for the set of $s$-boxes that contain a point of \(S\) and whose corresponding $5s$-boxes contain a point of \(T\).
Write \(\upsilon(S,\,T)\) for the set of $s$-boxes in \(\tau(S,\,T)\) whose corresponding $3s$-boxes contain a point of $\operatorname{\partial_{oV}}{\!\left(S\right)}$.
Write \(|\tau(S,\,T)|\) and \(|\upsilon(S,\,T)|\) for the number of $s$-boxes in these sets.
\end{definition}

We denote by $\deg(A)=\sum_{x\in A}\deg(x)$ the sum of degrees of vertices of $A$ and by $\comps(A)$ the set of connected components of $A$.

\begin{proposition}\label{prop:exponentialclusterrepulsionmodified}
    For any \(d\ge 2\), for any sufficiently large \(L\), there exist constants \(c\) and \(N\) such that for all \(n\ge N\), \(m\ge (\log n)^{2d}\) and \(t\), the following holds. For $A,B\subset G$ let the event $Y_{A,B}(m,t)$ be the event that the following hold: 
    \begin{enumerate}
    \item $A$ and $B$ belong to different clusters of $G$
    \item both $A$ and $B$ have total degrees at least $m$, i.e.\ $\deg(A)\ge m$ and $\deg(B)\ge m$
    \item $A$ is connected
    \item all components of $B$ have total degrees at least $(\log n)^{2d}$, i.e.\ $\min_{D\in \comps(B)}\deg(D)\ge (\log n)^{2d}$
    \item $A$ and all components of $B$ have \(\ell_\infty\)-diameter at least $\frac{2}{5}Lr_{\max}$, i.e.\ $\min_{D\in \{A\}\cup\>\!\comps(B)}\diam{D}\ge
            \frac{2}{5}Lr_{\max} $ 
    \item $|\tau(A,B)|\ge t$.
    \end{enumerate}
    
    Then we have that
    \begin{equation*}
        \pr{\exists A,B \subset G: Y_{A,B}(m,t)}\le e^{-c\max(m^{1-\frac1d},\, t)}.
    \end{equation*}
\end{proposition}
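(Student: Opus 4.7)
The plan is to adapt Pete's~\cite{pete_exp_cluster_rep} cluster repulsion strategy from percolation to the RGG setting. Given $A,B$ witnessing $Y_{A,B}(m,t)$, I will exhibit a set $\mathcal{B}$ of bad blocks of size at least $c\max(m^{1-\frac1d},t)$ with at most $|\mathcal{B}|/\log n$ *-connected components; then \Cref{lemma:bigbadwithfew*conncompsunlikely} combined with a union bound over the (only finitely many) possible $\mathcal{B}$ of a given size will yield the desired exponential upper bound. The natural candidate is
\[
\mathcal{B} \;:=\; \partial_{iV}^{B^L}(A^L) \;\cup\; (A^L \cap B^L).
\]
Both parts consist of bad blocks by the comment after \Cref{def:vertex_boundaries}: the boundary part because any block in $\partial_{iV}(A^L)$ is bad once $L\ge 10/3$, and the intersection part because a block containing substantial clusters of two disjoint $G$-clusters must have at least two big clusters, violating goodness.

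To lower bound $|\mathcal{B}|$ by $cm^{1-\frac1d}$, I would combine the per-block degree bound $O((\log n)^2)$ obtained from \Cref{lemma:sumsofdeginboxes} with the connectedness of $A$. This first forces $|A^L|\gtrsim m/(\log n)^2$, which is still $\gg 1$ thanks to the hypothesis $m\ge(\log n)^{2d}$. Since $A$ is connected the set $A^L\subset s\Z^d/{\sim}$ is *-connected, so the discrete isoperimetric inequality in $\Z^d$ gives $|\partial_{iV}(A^L)|\gtrsim |A^L|^{1-\frac1d}\gtrsim m^{1-\frac1d}$ after absorbing polylogarithmic losses using $m\ge(\log n)^{2d}$. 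The assumptions on $B$ — each component has diameter at least $\frac{2}{5}Lr_{\max}$ and degree at least $(\log n)^{2d}$ — then ensure that every $B$-component itself occupies a substantial piece of $B^L$, so $\partial_{iV}^{B^L}(A^L)$ captures a constant fraction of $\partial_{iV}(A^L)$ rather than being much smaller.

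The bound $|\mathcal{B}|\gtrsim t$ is the heart of the argument and what I expect to be the main obstacle. For each $s$-box $\sigma\in\tau(A,B)$ the block $B(\sigma)$ containing $\sigma$ holds both an $A$-vertex and a $B$-vertex (since $5s\le 3Ls/2$ for $L$ large), so either $B(\sigma)\in A^L\cap B^L$ (already in $\mathcal{B}$) or at least one of $A$ or $B$ has only non-substantial clusters inside $B(\sigma)$, in which case the relevant cluster must extend out through an adjacent block; marching in this direction toward a substantial block for that cluster, one must cross a block in $\partial_{iV}(A^L)$ or in $A^L\cap B^L$, giving an element of $\mathcal{B}$ to charge $\sigma$ to. Making this charging rigorous so that each bad block is charged only $O(1)$ times is the subtle step, since in percolation one works edge-by-edge, whereas here one must cover $\tau(A,B)$ by blocks while exploiting the fact that each block contains only $O(L^d)$ many $s$-boxes and that the "crossing" procedure is anchored in a local $O(1)$-neighborhood of $B(\sigma)$. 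The auxiliary set $\upsilon(A,B)\subset \tau(A,B)$ of \Cref{def:boundarysboxes} is designed precisely to localise this overcount.

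Finally, for the *-component count: $A^L$ being *-connected makes $\partial_{iV}^{B^L}(A^L)$ have at most one *-component per $B$-component to which it points, and the blocks of $A^L\cap B^L$ attach to these by *-adjacency. Since each $B$-component $D$ has $\deg(D)\ge(\log n)^{2d}$, the per-block degree bound forces $D$ to occupy $\gtrsim(\log n)^{2d-2}$ blocks of $B^L$, and by the $m^{1-\frac1d}$ analysis applied to each such $D$ the contribution of $D$ to $\mathcal{B}$ is itself $\gg \log n$. Hence the total number of *-components of $\mathcal{B}$ is at most $|\mathcal{B}|/\log n$ for $d\ge 2$ and $n$ large. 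Applying \Cref{lemma:bigbadwithfew*conncompsunlikely} and summing the resulting bound $2^{-k}$ over $k\ge c\max(m^{1-\frac1d},t)$ closes the argument.
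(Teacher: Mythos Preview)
Your set $\mathcal{B}=\partial_{iV}^{B^L}(A^L)\cup(A^L\cap B^L)$ and the endgame via \Cref{lemma:bigbadwithfew*conncompsunlikely} match the paper exactly, but the size estimate $|\mathcal{B}|\gtrsim m^{1-1/d}$ has two real gaps. First, from $|A^L|\gtrsim m/(\log n)^2$ isoperimetry only yields $(m/(\log n)^2)^{1-1/d}$, and the hypothesis $m\ge(\log n)^{2d}$ does \emph{not} absorb the factor $(\log n)^{-2(1-1/d)}$; it merely guarantees the result is $\gg 1$, so you do not recover $c\,m^{1-1/d}$ as stated. Second, the assertion that $\partial_{iV}^{B^L}(A^L)$ captures a constant fraction of $\partial_{iV}(A^L)$ is false in general: if $A^L$ has many small holes and $B^L$ occupies only one, the boundaries of the others are lost. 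The paper repairs both issues at once with a thickening trick. Partition into log-boxes of side $\asymp(\log n)^{2/d}$; by \Cref{lemma:sumsofdeginboxes} and condition~(5), both $A^L$ and $B^L$ meet $\gtrsim m/(\log n)^2$ of them. If few log-boxes meet $\partial_{iV}^{B^L}(A^L)$ or $A^L\cap B^L$ (otherwise already done), one can absorb into $A^L$ and $B^L$ the blocks of all remaining log-boxes meeting them without altering $\mathcal{B}$; the resulting $\widehat{A}^L,\widehat{B}^L$ each contain $\ge c_3 m$ blocks with \emph{no} logarithmic loss. One then applies toroidal isoperimetry not to $\widehat{A}^L$ but to the $*$-components $W_i$ of $(A^L)^c$ meeting $B^L$, using $\sum_i|W_i|\ge|\widehat{B}^L|-|A^L\cap B^L|\gtrsim m$, to obtain $|\mathcal{B}|\ge\sum_i|\partial_{oV}(W_i)|\gtrsim\min\bigl((\sum_i|W_i|)^{1-1/d},\,|\widehat{A}^L|^{1-1/d}\bigr)\gtrsim m^{1-1/d}$.

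Two smaller remarks. The $t$-bound is not the obstacle you anticipate: condition~(5) eliminates your second case entirely. Since $A$ and every component of $B$ have $\ell_\infty$-diameter $\ge\frac{2}{5}Lr_{\max}\ge\frac{Ls}{5}$, for each $\sigma\in\tau(A,B)$ one can choose a block containing $\sigma$ in which the connected piece of $A$ through the $A$-vertex and that of $B$ through the nearby $B$-vertex each already have diameter $\ge\frac{Ls}{5}$ (the numerical check is $\frac{Ls}{5}+\frac{Ls}{5}+3s\le\frac{Ls}{2}$ for $L\ge30$). That block lies in $A^L\cap B^L\subset\mathcal{B}$, and since a block contains at most $(3L)^d$ $s$-boxes one gets $|\mathcal{B}|\ge(3L)^{-d}t$ with no charging argument. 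For the $*$-component count, on the torus $\partial_{oV}(W_i)$ need not be $*$-connected; the paper invokes bicoherence to bound it by two components and observes that when there are two, each must wind around the torus and hence have $\gtrsim n^{1/d}\gg\log n$ blocks. It then reapplies the size bound with $m=(\log n)^{2d}$ to each component $B_j$ of $B$ to show its contribution to $\mathcal{B}$ has size $\ge c_1(\log n)^{2(d-1)}\ge\log n$.
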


\begin{proof} Fix \(d\ge2\) and \(L\ge30\) satisfying the conditions in the proof of Lemma~\ref{lemma:bigbadwithfew*conncompsunlikely}.

If \(A\), \(B\) are subsets of \(G\) satisfying the conditions, the set
\begin{align*}
\mathcal P :=
\left(A^L \cap B^L\right)
\cup \operatorname{\partial_{iV}^{B^L}}{\!\left(A^L\right)}
\end{align*}
is a set of bad blocks.
Indeed, this is because $\operatorname{\partial_{iV}^{B^L}}{\!\left(A^L\right)}\subset \operatorname{\partial_{iV}}{\!\left(A^L\right)}$ which only contains bad blocks and as $A^L\cap B^L$ contains blocks which are substantial for two sets which belong to different clusters, which means that they are also bad.
So by Lemma~\ref{lemma:bigbadwithfew*conncompsunlikely} it is sufficient to prove that \(|\mathcal P| \ge c_1 \max(m^{1-\frac1d},\, t)\), for some constant \(c_1>0\), and that each *-connected component of \(\mathcal P\) has size \(\ge \log n\).

Each $s$-box in \(\tau(A,B)\) is contained in at least one block of \(A^L\cap B^L\),
since \(A\) and all components of \(B\) have \(\ell_\infty\)-diameter \(\ge\frac{Ls}5\) and \(\frac{Ls}5+\frac{Ls}5+3s \le \frac {Ls}2\).
Each block contains \(\le (3L)^d\) $s$-boxes.
Hence \(|\mathcal P| \ge \frac{1}{(3L)^d}t\).

Subdivide the torus into boxes of side length between \(\frac{(\log n)^{\frac{2}{d}}}{2}\) and \((\log n)^{\frac{2}{d}}\).
Then \(A^L\) and \(B^L\) each intersect at least \( c_2\frac{m}{(\log n)^2}\) such \emph{log-boxes}, for some constant \(c_2\), since \(A\) and all components of \(B\) have \(\ell_\infty\)-diameter \(\ge\frac{Ls}5\) and using \Cref{lemma:sumsofdeginboxes}, $A$ and $B$ each intersect $\gtrsim \frac{m}{(\log n)^2}$ such boxes. Assume that $|A^L\cap B^L|\le \frac{c_2}{4}m^{1-\frac{1}{d}}$, as otherwise we already have a wanted lower bound on $|\mathcal{P}|$.
Take \(N\) large enough that \(\frac{(\log n)^{\frac{2}{d}}}{2}\ge 3Lr_{\max}\).
Write \(W_1,\,\dots,\,W_m\) for the *-connected components of \(\left(A^L\right)^c\) that contain or have in their outer vertex boundary at least one block of \(B^L\).

If \(\ge \frac{c_2}4 m^{1-\frac1d}\) log-boxes intersect \(\operatorname{\partial_{iV}^{B^L}}{\!\left(A^L\right)}\), then
\[|\mathcal P|\ge \left|\operatorname{\partial_{iV}^{B^L}}{\!\left(A^L\right)}\right| \ge \frac{c_2}{4\cdot2^d}m^{1-\frac1d}.\]
If not, then \(\ge c_2\frac{m}{(log n)^2}-\frac{c_2}{2}m^{1-1/d}\ge\frac{c_2}{2}\frac{m}{(log n)^2}\) log-boxes intersect \(A^L\) but not \(\operatorname{\partial_{iV}^{B^L}}{\!\left(A^L\right)}\) or $B^L$, and \(B^L\) but not $A^L$ (and therefore not \(\operatorname{\partial_{iV}^{B^L}}{\!\left(A^L\right)}\)), respectively.
We may add the blocks that are fully contained in such log-boxes to \(A^L\) and \(B^L\), respectively, without changing \(\mathcal P\).
Indeed, the affected blocks belonged to neither \(A^L\) nor \(B^L\) and are added to only one of them. The blocks added to $A^L$ did not belong to \(W_1\cup...\cup W_m\) which means that the connected component of $(A^L)^c$ to which they belonged did not have any blocks from $B^L$ which means that adding them could not have increased the outer boundary towards $B^L$. The blocks added to $B^L$ belonged to \(W_1\cup...\cup W_m\) so adding them to $B^L$ could also not have increased the size of $\operatorname{\partial_{iV}^{B^L}}{\!\left(A^L\right)}$.
Label the corresponding sets of blocks after this addition \(\widehat{A}^L\) and \(\widehat{B}^L\), and note that as they now fully contain $\gtrsim \frac{m}{(\log n)^2}$ boxes of side length $\gtrsim (\log n)^{\frac{2}{d}}$ we have
\begin{equation}
    |\widehat{A}^L|,\,|\widehat{B}^L| \ge c_3m,\label{eq:largeALBL}
\end{equation}
for some constant \(c_3>0\).
We may assume $|A^L\cap B^L|\le \frac{c_3}{2}m$.
The grid-isoperimetric inequality for the torus \cite{bollobas_leader_isoperimetric} gives
\begin{align*}
|\mathcal P|
&\ge \,c_4\min\left(\frac{1}{2^d}\sum_i |W_i|^{1-\frac1d},\, |\widehat{A}^L|^{1-\frac1d}\right)
&\ge \,c_4\min\left(\frac{1}{2^d}\left(\sum_i|W_i|\right)^{1-\frac1d},\, |\widehat{A}^L|^{1-\frac1d}\right) 
&\ge \frac{c_3^{1-\frac1d}\frac{c_4}{2^d}}{2}m^{1-\frac1d},
\end{align*}
where the last inequality follows by \eqref{eq:largeALBL} and \(|\widehat{B}^L|=|A^L\cap B^L|+\sum_i |W_i\cap \widehat{B}^L|\).

It remains to prove that each *-connected component of \(\mathcal P\) has size \(\ge \log n\).

If \(\Lambda_n^{(d)}\) was a box, \cite[Theorem 3]{boundary_connectivity} would give that each \(\partial_{oV}(W_i)\) is *-connected.
\(\Lambda_n^{(d)}\) is a torus, and bicoherence of the torus \cite[Lemma 9.2]{penrose_rgg} gives that each \(\partial_{oV}(W_i)\) has \(\le2\) *-connected components.
But then on the torus, when there are two such components, both must contain \(\gtrsim n^{\frac1d} \gg \log n\) blocks.

The contribution \(\mathcal P_j:=\left(A^L \cap B_j^L\right) \cup \operatorname{\partial_{iV}^{B_j^L}}{\!\left(A^L\right)}\) of a component \(B_j\) of \(B\) to \(\mathcal P\) is either *-connected or each of its *-connected components contains a component of a not *-connected \(\partial_{oV}(W_i)\).
Indeed, if \(A^L\cap B_j^L=\emptyset\), then \(\mathcal P_j=\partial_{oV}(W_i)\) for some \(i\) and the previous statement clearly holds.
Otherwise, we can construct a sequence of sets containing \(A^L\cap B_j^L\) starting from \(B_j^L\) by, for each \(i\) for which $B^L_j\cap \partial_{oV}(W_i)\ne \emptyset$ in turn, replacing the blocks in \(W_i\) with \(\partial_{oV}(W_i)\).
\(B^L_j\) is *-connected, and since \(B^L_j\cap \partial_{oV}(W_i)\subset A^L\cap B_j^L\), the number of *-connected components can increase only where \(\partial_{oV}(W_i)\) is disconnected.
If \(A^L\cap B_j^L\neq\emptyset\), the last set in the sequence is $\mathcal P_j$, as then $B^L_j\cap \partial_{oV}(W_i)\ne \emptyset$ whenever $B^L_j\cap (W_i\cup \partial_{oV}(W_i))\ne \emptyset$ because $B^L_j$ is connected and intersects $A^L$.
So the previous statement holds in this case too.

If $\mathcal P_j$ is not *-connected, each of its *-connected components has size $\gg \log n$ (as they contain one of the two parts of some not *-connected $\partial_{oV}(W_i)$). If $\mathcal P_j$ is *-connected, by applying the previous part to $A^L$ and $B_j^L$ (with $m=(\log n)^{2d}$) we have  \(|\mathcal P_j|\ge c_1(\log n)^{2(d-1)}\), and so it is a *-connected component of size at least $\log n$ and the proof follows.
\end{proof}

\begin{proof}[Proof of Proposition~\ref{prop:largeconnsetshavelargeboundaryanyd}]
For any \(A\) in \(L_1(G)\), let \(B(A)\) be the union of the components \(B_i\) of \(A^c\setminus B_{\frac{3s}{2}}(\upsilon(A,A^c))\) that have \(\deg(B_i) \ge (\log n)^{2d}\), where by \(A^c\) we mean the complement of $A$ in the giant, i.e.\ \(L_1(G)\setminus A\).
Let \(M:=C\til c(\log n)^{2d^2}\), for some large constant \(C\). Let \(\hat c\) be a small constant.
Define $X$ to be the event that $ \deg(G) \le 2\gamma_d r_{\max}^d n$, that the total degree in any $\frac25Lr_{\max}$-box is less than $(\log n)^{2d}$, i.e.\  $\max_{b\in\{\frac25Lr_{\max}\text{-boxes}\}}\deg(b)<(\log n)^{2d}$, and that there exists $A\!\subset\! L_1(G)$ for which the following holds:

\begin{enumerate}
\item $  \deg(A) \ge M$, and $ \deg(A^c) \ge \deg(A)$
\item $A$ and $A^c$ are both connected
\item $|\upsilon(A,\, B(A))| < \hat c(\deg A)^{1-\frac1d}$.
\end{enumerate}

For each \(m\ge M\) and \(t\ge0\), define $Y(m,t)$ to be the event that $Y_{\widetilde{A},B}(\frac{m}{2},t)$ (from Proposition~\ref{prop:exponentialclusterrepulsionmodified}) holds for some $\widetilde{A}$ which is a cluster in $G$ with $\deg(\widetilde{A})=m$ and let $ Y:=\bigcup_{m=M}^\infty \bigcup_{t=0}^\infty Y(m,t)$.

Define a map  $F:\,X\to Y$ mapping $ \omega \mapsto \omega'$ as follows:
For each configuration \(\omega\in X\), choose a set \(A\) satisfying the defining conditions of \(X\). Let \(\omega'\) be the configuration obtained from \(\omega\) by deleting the points in \(B_{\frac{3s}{2}}(\upsilon(A,\,B(A)))\setminus A\).
To see \(\omega'\in Y\), let \(\til A\) be the connected component of \(A\) \footnote{which might be larger than $A$ as it can contain small components of $A^c\setminus B_{\frac{3s}{2}}(\upsilon(A,\,B(A)))$} in \(\omega'\) and let \(B\) be \(B(\til A)\) in \(\omega\).
Then \(B(A)\subset B\), and connectedness of \(A^c\) in \(\omega\) implies
\(\numcomps(A^c\setminus B_{\frac{3s}{2}}(\upsilon(A,\,B(A))))\le c_1 |\upsilon(A,B(A))|\), for some constant \(c_1=c_1(d,r_{\max})\).
So
\begin{align*}
    \deg(B)
    &\ge\deg(A)-(\log n)^{2d}c_1\hat c (\deg (A))^{1-\frac1d}\\
    &\ge\deg(\til A)-2\cdot(\log n)^{2d}c_1\hat c (\deg (A))^{1-\frac1d}\\
    &\ge\frac{\deg(\til A)}{2},
\end{align*}
where for the last inequality we have assumed $C$ is sufficiently large.  Also, by the definition of $B(\widetilde{A})$, all components in it have a total degree which is lower bounded by $(\log n)^{2d}$. It follows that, under this assumption, \(\til A\) and \(B\) in \(\omega'\) satisfy the defining conditions of \(Y\).

\Cref{prop:exponentialclusterrepulsionmodified} gives
\begin{align*}
    \pr{Y(m,t)}\le e^{-\frac{\hat{c}_1}{2}\max(m^{1-\frac1d},\, t)},
\end{align*}
where \(\hat{c}_1\) is a constant from the proposition.

For any \(\omega'\) in the image of \(F\), its preimages in \(X\) can be recovered as follows: Choose as \(\til A\) one of the \(\le 2\gamma_d r_{\max}^d n\) components in \(\omega'\) with \(\deg(\til A)\ge M\). Let \(B\) be the union of the other components in \(\omega'\) that have \(\deg(\cdot)\ge (\log n)^{2d}\).
Assume \(\til A\) was chosen in such a way that \(\til A\) and \(B\) satisfy the defining conditions of \(Y\).
Write \(m:=\deg(\til A)\) and \(t:=|\tau(\til A, B)|\).
Add (zero or more) points to \(t\land \lfloor \hat c m^{1-\frac1d}\rfloor\) of the \(t\) $3s$-boxes \(B_{\frac{3s}{2}}(\tau(\til A,B))\). By \cite[Theorem 9.4]{penrose_last_ppp_lectures}, for \(\til u\) a subset of \(\tau(\til A,B)\), a PPP on \(B_{\frac{3s}{2}}(\til u)\) conditioned to have points at \(\til A\cap B_{\frac{3s}{2}}(\til u)\) is distributed as the original PPP plus the appropriate points.
So, since \(\pr{K\ge 0}\le e^{\lambda} \pr{K=0}\) when \(K\sim \Pois{\lambda}\),
\begin{equation}
\label{eq:prXsumbound}
\pr{X}=\pr{F^{-1}(Y)}
\le \sum_{m=M}^{\infty} \sum_{t=0}^{\infty} 2\gamma_d r_{\max}^d n \binom{t}{t\land \lfloor \hat c m^{1-\frac1d}\rfloor} Q^{t\land \lfloor \hat c m^{1-\frac1d} \rfloor} e^{-\frac{\hat{c}_1}{2}\max(m^{1-\frac1d},\, t)}
\end{equation}
where \(Q>1\) is a constant depending on $r_{\max}$, which is an upper bound on the inverse of the probabilities that a specific box in $\tau(\tilde{A},B)$ has no added points.

Bounding the right hand side of \eqref{eq:prXsumbound}, following the argument of Pete~\cite{pete_exp_cluster_rep}, we get, for any $K\ge 1$ 
\[
P(X)\le 2\gamma_dr_{\max}^dn\sum_{m=M}^\infty\left(\sum_{t=0}^{K\hat{c}m^{1-\frac{1}{d}}} \binom{t}{t\land \lfloor \hat c m^{1-\frac1d}\rfloor}Q^{t}e^{-\frac{\hat{c}_1}{2}m^{1-\frac{1}{d}}}+\sum_{t=K\hat{c}m^{1-\frac{1}{d}}+1}^\infty \binom{t}{\lfloor \hat c m^{1-\frac1d}\rfloor}Q^{\lfloor \hat c m^{1-\frac1d}\rfloor}e^{-\frac{\hat{c}_1}{2}t}\right)
\]

and by making $K$ sufficiently large, for all $\hat{c}$ sufficiently small in terms of $\hat{c}_1$ and $K$, the second sum can be upper bounded by $\sum_{t=K\hat{c}{m^{1-\frac{1}{d}}}+1}^\infty e^{-\frac{\hat{c}_1}{4}t}\le \exp\left(-K\hat{c}\frac{\hat{c}_1}{4}m^{1-\frac{1}{d}}\right)$, while the terms in the first sum are upper bounded by $(1+Q)^{2t}e^{-\frac{\hat{c}_1}{2}m^{1-\frac{1}{d}}}$ and so for all $K$ and sufficiently small $\hat{c}$  it is also bounded by $\exp\left(-\frac{\hat{c}_1}{4}m^{1-\frac{1}{d}}\right)$. 
Hence, for \(\hat c\) chosen sufficiently small,
\begin{align}
\label{eq:stp1}
\pr{X} \le 2\gamma_d r_{\max}^d n\,e^{-c_2M^{1-\frac1d}}\underset{n\to\infty}{\longrightarrow} 0.
\end{align}

If there is a set \(A\subset L_1(G)\) with both $A$ and $A^c=L_1(G)\setminus A$ connected, $\frac12\ge\pi(A)\ge \frac{C(\log n)^{2d^2}}{n}$ and less than \(c (\deg A)^{1-\frac1d}\) edges between \(A\) and \(A^c\) for a small enough constant $c$ (in terms of $\hat{c}$ and $d$), then the defining conditions of \(X\) are satisfied unless some box \(b\) of side length \(\frac{2}{5}Lr_{\max}\) has \(\deg(b) \ge (\log n)^{2d}\), \(\deg(G)>2\gamma_d r_{\max}^d n\) or \(\deg(L_1(G))<\til c n\). By \eqref{eq:stp1}, \Cref{lemma:sumsofdeginboxes}, a simpler bound and \Cref{lemma:ordernedges}, the probability of each of these events goes to \(0\) as \(n\to\infty\), so with high probability no such \(A\) exists.
\end{proof}

\section{Bounds on the mixing and relaxation time}

We now give bounds on mixing time and on the relaxation time. 

\begin{proof}[Proof of Theorem~\ref{trm:tmixRGG} and Proposition~\ref{prop:trelRGG}]
With high probability, by \Cref{prop:largeconnsetshavelargeboundaryanyd} for all \(A\) which are connected and have connected complement and with $\frac12\ge\pi(A)\ge \frac{C(\log n)^{2d^2}}{n}$, we have
\begin{equation*}
\label{eq:phiaLowerBoundBig}
\Phi(A) = \frac{Q(A,A^c)}{\pi(A)}
\ge \frac{c(\deg(A))^{1-\frac{1}{d}}}{\deg(L_1(G))\pi(A)}= \frac{c(\pi(A))^{1-\frac{1}{d}}}{(\deg(L_1(G)))^{\frac{1}{d}}\pi(A)}
\overset{\ref{lemma:ordernedges}}\ge \frac{c}{(2\til C)^{\frac1d}} \frac{1}{({n\,\pi(A)})^{\frac{1}{d}}}.
\end{equation*}
We work on the high probability event that the above holds.
For all \(A\) with $\pi(A)\le \frac{C(\log n)^{2d^2}}{n}$, using that \(L_1(G)\) is connected, we have
\begin{equation*}
\label{eq:phiaLowerBoundSmall}
\Phi(A) = \frac{Q(A,A^c)}{\pi(A)}
\ge \frac{n}{\deg(L_1(G))C(\log n)^{2d^2}}
\overset{\ref{lemma:ordernedges}}\ge \frac{1}{2\til C C} \frac{1}{(\log n)^{2d^2}}.
\end{equation*}

 Using \cite[Lemma 4.36]{aldous_fill_2014}, to obtain a lower bound on $\Phi_*=\min_{A:\pi(A)\le \frac{1}{2}}\Phi(A)$ we only need to consider connected $A$ with $A^c=L_1(G)\setminus A$ connected, as such sets achieve the minimum of the quantity $\frac{Q(A, A^c)}{\pi(A)\pi(A^c)}$. Using the above bounds, the bottleneck ratio therefore satisfies \[\Phi_*\ge \alpha\min\left(\frac{1}{n^{\frac{1}{d}}},\,\frac{1}{(\log n)^{2d^2}}\right) \] where we take $\alpha=\min\left\{\frac{c}{(2\til C)^{\frac1d}},\frac{1}{2\til C C} \right\}$.  Similarly to \cite[Lemma 2.6]{benjamini_percolation} we now show that, with high probability, for all $\frac12\ge x \ge \pi_{\min}$ we have
\[\Phi_*(x)\gtrsim \min\left(\frac{1}{(nx)^{\frac{1}{d}}},\,\frac{1}{(\log n)^{2d^2}}\right).\]
First, we fix a small constant $q<\frac14$ to be specified later and note that if $q<x$, the above holds as $\Phi_*(x)\ge \Phi_*\ge \alpha q^{\frac1d}\min\left(\frac{1}{(nq)^{\frac{1}{d}}},\,\frac{1}{(\log n)^{2d^2}}\right)$. Now consider $\pi_{\min}\le x\le q$ and let  $\Phi_*(x)=\Phi(A)$ where $A$ with $\pi(A)\le x$ is taken to be a connected set, as the infimum in the isoperimetric profile is achieved for some connected set. If $\pi(A)\le \frac{C(\log n)^{2d^2}}{n}$ the desired inequality clearly holds, so we assume the opposite. Likewise, assume that $\Phi(A)<\frac{\alpha}{(nx)^{\frac{1}{d}}}$, as otherwise the desired inequality holds. This means that $A^c=L_1(G)\setminus A$ can be split into connected components $A_1,\ldots, A_r$ for some $r>1$. Let $i\le r$ be such that $\pi(A_i)$ is maximal. If $\pi(A_i)\le \pi(A)$ then there is a set $B$ which is a union of $A$ and some of the sets $A_j$ for $j\le r$ such that $\pi(B)\in (\frac{1}{4},\frac{1}{2})$. We then have for all large enough $n$ that
\[ \frac{\alpha}{n^{\frac1d}}\le \Phi_*\le \Phi(B)\le \frac{Q(B,B^c)}{\pi(B)}\le 4Q(A,A^c)\le \alpha\frac{4x}{(nx)^{\frac{1}{d}}}\le\alpha \frac{4q^{1-\frac{1}{d}}}{n^{\frac1d}}\]
and so by taking $q$ sufficiently small, this gives a contradiction. 
Therefore $\pi(A_i)\ge \pi(A)$. Let $A'$ be the set of smaller $\pi$ measure of $A_i$ and $A_i^c$. Then $\pi(A)\le \pi(A')\le \frac{1}{2}$ and $A'$ is connected with connected complement. Therefore, $\Phi(A')\ge \frac{\alpha}{(n\pi(A'))^{\frac1d}}$ and so 
\[Q(A,A^c)\ge Q(A',A'^c)\ge \alpha \frac{\pi(A')^{1-\frac1d}}{n^{\frac1d}}\ge \alpha \frac{\pi(A)^{1-\frac1d}}{n^{\frac1d}}.\] This finally implies that \[\Phi_*(x)=\Phi(A)\ge \frac{\alpha}{(n\pi(A))^{\frac1d}}\ge   \frac{\alpha}{(nx)^{\frac1d}} \] completing the proof of the lower bound on the isoperimetric profile. 

With high probability, using the isoperimetric profile bound from~\cite{evolvingsets} we now have
\begin{equation*}
    \tmixtext
    \lesssim \int_{4\pi_{\min}}^{\frac12}\frac{\mathrm d x}{x\Phi_*^2(x)} + \Phi_*^{-2}
     \lesssim \int_{0}^{\frac12}n^{\frac{2}{d}}x^{1-\frac{2}{d}}\,\mathrm dx + \int_{\frac{4}{2\til C n}}^{\frac12} \frac{(\log n)^{4d^2}}{x}\,\mathrm dx + n^{\frac{2}{d}}\lesssim n^{\frac{2}{d}}
\end{equation*}
as desired. This completes the proof of the upper bound on the mixing time, and as the mixing time always upper bounds the order of the relaxation time, we also have an upper bound on the relaxation time. We now show a same order lower bound on the relaxation time, which then also lower bounds the mixing time by this same order and shows that there is no cutoff. 

For the lower bound, we use the variational characterisation of the spectral gap, which tells us that for this lazy reversible chain $\trel \ge \frac{\Var_{\pi}(f)}{\mathcal{E}(f)}$ for any non-constant function $f$. Consider a box of side length~$\frac{n^{\frac{1}{d}}}{10}$  centred at the origin and a diametrically opposite box to it on the torus of the same side length. We know that with high probability, both of these have a giant component, which has a total sum of the edges at least $\widetilde{c} {n}$ for a suitable constant $\tilde{c}$. With high probability, the second-largest component of RGG has size~$\ll n$, implying that both of these components belong to $L_1(G)$. Then there is a constant $\widetilde{c}_1$ such that at least $\widetilde{c}_1$ proportion of edges are joining the vertices with $\ell_\infty$ distance from the origin at most $\frac{1}{5}n^{\frac{1}{d}}$ while at least the same proportion of edges are joining the vertices with $\ell_\infty$ distance from the origin at least $\frac{2}{5}n^{\frac{1}{d}}$.  Similarly to \cite{benjamini_percolation}, we can use the function $f(x)=\|x\|_\infty$ for which we now have that $\Var_\pi(f)\ge  \frac{\widetilde{c}_1}{100}n^{\frac{2}{d}}$ while the $\mathcal{E}(f)\le r_{\max}^2$ as vertices with $P(x,y)>0$ have distance $\le r_{\max}^2$. This shows that $\trel\ge\frac{\widetilde{c}_1n^{\frac{2}{d}}}{100r_{\max}^2}$, completing the proof of the lower bound for the relaxation time, and therefore also for the mixing time of the lazy simple random walk, and establishes that there is no cutoff. 

We now show the same results for the simple random walk on RGG. A result from \cite{LRP} which relies on the results from \cite{near_bipartiteness} states that for a simple random walk on a graph $\trel\asymp \frac{1}{1-\lambda_2}$ holds if there exists a constant $\delta$ such that for any partition of vertices into set $A$ and $B$ we have \begin{equation}\label{eq:nearbipartite}\left|\left\{\{x,y\}\in E: (x,y)\in (A\times A){\bigcup} (B\times B)\right\}\right|\ge \delta \sum_{x\in V}\deg(x).\end{equation}  
Mix-hit comparison from \cite[Proposition 1.8 and Remark 1.9]{characterisation_of_cutoff} implies that, as the relaxation times for lazy simple random walk and simple random walk on this graph are of the same order, and hit times for the two can easily be bounded in terms of each other, then the order of the mixing time for the simple random walk is the same as for the lazy simple random walk and so the proof follows upon showing \eqref{eq:nearbipartite}. It is easy to see that on the high probability event that the sum of degrees in RGG is upper bounded by $Cn$ to get that every partition of $L_1(G)$ satisfies~\eqref{eq:nearbipartite}, it is enough to know that there are at least $\gtrsim n$ non-intersecting triangles in $L_1(G)$. To this end, it is enough to know that if we subdivide the torus into boxes of side length $s\le \frac{r}{10}$, the number of vertices in $L_1(G)$ which belong to the boxes containing at least $3$ vertices of $L_1(G)$ is $\gtrsim 1$. Indeed, if such a box has $k$ vertices, it has at least $\lfloor k/3\rfloor$ disjoint triangles, as any $3$ vertices make a triangle and also triangles belonging to different boxes are clearly disjoint (as we can assume that no points of the PPP are on the boundary of the boxes). Notice that as $r_{\min}>r_g$ we can take a small enough $\varepsilon$ such that RGG corresponding to a PPP with intensity $1-\varepsilon$ (and same radius $r$) has a giant component which, with high probability, has at least $m \asymp n$ vertices. If the total number of different $s$-boxes which contain at most $2$ vertices of this giant is $\le \frac{m}{4}$, then at least $\frac{m}{2}$ vertices already belong to $s$-boxes which have at least three vertices of this giant. Otherwise, for each of at least $\frac{m}{4}$ boxes with at most two vertices of the giant, there is a positive probability $p$ that the $s$-box has at least $2$ vertices of the PPP with intensity $\varepsilon$ and due to the concentration of independent Bernoulli trials, with high probability at least $\frac{mp}{8}$ of such boxes will have at least three vertices belonging to the union of these two PPPs (which is a PPP of intensity $1$). In both cases, we get $\asymp n$ vertices belong to $s$-boxes with at least three vertices in them, and the proof follows.
\end{proof}

\section{Isoperimetric bound for \texorpdfstring{$d=2$}{d=2}}\label{sec:d=2isop}

In this section, we present an easier proof of the isoperimetric bound for the case of $d=2$ and large enough $r_{\min}$, which follows the work of Benjamini and Mossel~\cite{benjamini_percolation}. This bound also holds for sets with $\frac12 \ge \pi(A)\ge \frac{C(\log n)^2}{n}$, which makes an improvement compared to \Cref{prop:largeconnsetshavelargeboundaryanyd}.

\begin{proposition}\label{prop:largesetshavelongdistance} For \(d=2\), there exist constants $C$ and $c$ such that, with high probability, each $A$ with $\pi(A)\ge \frac{C(\log n)^2}{n}$ contains two vertices at distance at least $c\sqrt{\deg(A)}$.
\end{proposition}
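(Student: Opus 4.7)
The plan is to prove the contrapositive: fix a small constant $c>0$ and a large constant $C>0$ to be specified, and assume that every pair of vertices of $A$ lies at Euclidean distance strictly less than $D:=c\sqrt{\deg(A)}$. Working on the intersection of the high-probability events of \Cref{lemma:sumsofdeginboxes} (for $d=2$, so the grid cells have side $\log n$ and each carries degree-sum at most $2\gamma_2 r_{\max}^2(\log n)^2$) and \Cref{lemma:ordernedges} (which lets me translate $\pi(A)\ge C(\log n)^2/n$ into $\deg(A)\gtrsim C(\log n)^2$), I would derive a contradiction.

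Under the contrapositive hypothesis, each coordinate of $A$ spans a range of at most $D$, so $A$ is contained in an axis-aligned square of side $D$; since $D\ll n^{1/2}$ for the constants in play, the torus wrap-around is irrelevant. Such a square meets at most $(\lceil D/\log n\rceil+1)^2$ cells of the $\log n$-grid. In the main regime $D\ge\log n$, which is forced by choosing $C\ge 1/c^2$, the number of cells is at most $9D^2/(\log n)^2$, so summing the per-cell bound gives
\begin{equation*}
\deg(A)\;\le\; \frac{9D^2}{(\log n)^2}\cdot 2\gamma_2 r_{\max}^2 (\log n)^2 \;=\; 18\gamma_2 r_{\max}^2\, c^2\,\deg(A).
\end{equation*}
Taking $c$ small enough that $18\gamma_2 r_{\max}^2 c^2<1$ yields the contradiction. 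In the remaining regime $D<\log n$, the set $A$ meets at most $4$ grid cells, so $\deg(A)\le 8\gamma_2 r_{\max}^2(\log n)^2$, which contradicts $\deg(A)\gtrsim C(\log n)^2$ once $C$ is sufficiently large.

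All the heavy lifting is already done by \Cref{lemma:sumsofdeginboxes}, so the remainder is essentially constant-chasing. The only delicate point is the order of choices: I would first fix $c$ small so that the large-$D$ inequality closes, and then take $C$ large enough in terms of $c$ both to force $D\ge \log n$ in the main case and to rule out the small-$D$ regime. I do not foresee any serious obstacle beyond this bookkeeping; the reason the argument is special to $d=2$ is that the per-cell bound $r^d(\log n)^2$ matches the cell area $(\log n)^2$ exactly when $d=2$, so the covering count precisely cancels the per-cell degree and leaves a clean $D^2$ on the right-hand side.
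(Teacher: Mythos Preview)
Your argument is correct and is essentially the contrapositive of the paper's proof: both use \Cref{lemma:sumsofdeginboxes} to conclude that $A$ meets $\gtrsim \deg(A)/(\log n)^2$ boxes of side $\log n$ and hence has diameter $\gtrsim\sqrt{\deg(A)}$, the paper arguing directly (many boxes $\Rightarrow$ two far-apart points via pigeonhole on rows/columns) while you argue that small diameter forces few boxes and hence small degree. One minor imprecision: ``$D\ll n^{1/2}$'' is not literally true since $\deg(A)$ can be $\Theta(n)$, but what you actually need---namely $D<\sqrt n/3$, which is enough to make the torus wrap-around harmless---follows once you also impose $c<1/(3\sqrt{2\tilde C})$ using the bound $\deg(A)\le 2\tilde C n$ from \Cref{lemma:ordernedges}.
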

\begin{proof}
Let \(\til c\), \(\til C\) be as in~\Cref{lemma:ordernedges}.
Write \(D=\max(2\gamma_d r_{\max}^d, 2\til C)\).
Choose \(C=\frac{16D}{2\til c}\) and \(c=\frac{1}{4\sqrt {D}}\).
Subdivide the torus \(\Lambda_n^{(2)}\) into boxes of side length \(\ell\in[\frac{\log n}{2},\,\log n]\).
By \Cref{lemma:sumsofdeginboxes} and the pigeonhole principle,  with high probability  each \(A\) intersects at least \(\frac{\deg(A)}{D(\log n)^2}\) boxes.
So with high probability each non-empty \(A\) contains two points at horizontal or vertical distance at least
\[\min\left(\frac{\sqrt n-r_{\max}}{2},\, \left(\sqrt{\frac{\deg(A)}{D(\log n)^2}}-2\right)\ell\right).\]
For $A$ with $\pi(A)\ge \frac{C(\log n)^2}{n}$, by \Cref{lemma:ordernedges}, with high probability \(\frac{\deg(A)}{D(\log n)^2}\ge \frac{C2\til c}{D}=16\). So, taking \(n\ge 4r_{\max}^2\), with high probability for each such \(A\) we have that the above is at least 
\[
 \frac14 \min\left(\sqrt n,\, \sqrt{\frac{\deg(A)}{D}}\right) = c\sqrt{\deg(A)},\]
where the last equality follows from \Cref{lemma:ordernedges} using \(D\ge2\til C\).
\end{proof}

\begin{proposition}\label{prop:largeconnsetshavelargeboundary} For \(d=2\), there exist positive constants $C$ and $c$ such that if \(r_{\min} \ge 4\), 
the following holds with high probability.
For all connected $A$ with $\frac12\ge\pi(A)\ge \frac{C(\log n)^2}{n}$ the number of edges from \(A\) to \(A^c\) is at least $c\sqrt{\deg(A)}$.
\end{proposition}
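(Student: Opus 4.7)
The plan is to convert the Euclidean diameter lower bound from Proposition~\ref{prop:largesetshavelongdistance} into a lower bound on $|E(A,A^c)|$ via a planar topological argument, in the spirit of Benjamini--Mossel~\cite{benjamini_percolation}. Since $\pi(A)\le\frac12$ gives $\deg(A^c)\ge\deg(A)$, applying Proposition~\ref{prop:largesetshavelongdistance} to $A$ yields vertices $u,v\in A$ with $\|u-v\|_2\ge D:=c'\sqrt{\deg(A)}$, so $A$ has Euclidean diameter at least $D$.

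I would next fatten $A$ to the planar region $\widetilde A:=\bigcup_{x\in A}B(x,R)$ for a fixed $R\in(r/2,r)$; the assumption $r_{\min}\ge 4$ makes both $R$ and $r-R$ positive constants, and the overlap condition $2R>r$ ensures that any two RGG-adjacent $A$-vertices have overlapping $R$-balls, so $\widetilde A$ is connected. Hence $\widetilde A$ is a connected open subset of the torus of Euclidean diameter at least $D$. A standard planar-topology argument (project $\widetilde A$ onto a coordinate circle of the torus and note that each point in the projected image is the image of at least two points of $\partial\widetilde A$) gives $\mathcal H^1(\partial\widetilde A)\gtrsim D$. Now consider the outward shell $T:=\{y\in\Lambda_n^{(2)}: R<d(y,A)<r\}$, whose $2$-dimensional Lebesgue measure is of order $(r-R)\cdot\mathcal H^1(\partial\widetilde A)\gtrsim D$ by a coarea estimate. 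Because $D\gtrsim\sqrt{\deg(A)}\gtrsim\log n$, Poisson concentration shows that $T$ contains at least $c_1 D$ PPP-vertices with probability $1-e^{-\Omega(D)}$. Each such $y\in T$ satisfies $y\notin A$ (being at distance $>R$ from $A$) while $\|y-x\|<r$ for a nearest $A$-vertex~$x$, so $\{x,y\}$ is an RGG-edge; moreover $\{x,y\}\in E(A,A^c)$ whenever $y\in L_1(G)$, and distinct~$y$'s produce distinct edges by the disjointness of their $A^c$-endpoints.

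The main obstacle is ensuring that a constant fraction of the PPP-vertices in $T$ actually belong to $L_1(G)$, uniformly in the random $A$. This is handled via the renormalisation underlying the continuum-supercritical regime: by \Cref{claim:closed_blocks} the good blocks stochastically dominate a supercritical Bernoulli site percolation, from which a standard block-connectivity argument yields that a positive fraction of PPP-vertices in any planar region of area at least $\log n$ are connected through the PPP to the giant and hence lie in $L_1(G)$. Combining this with the Poisson concentration for $|T\cap\mathrm{PPP}|$ and a union bound over the block-level skeleton that determines the relevant data of $A$ (which has only sub-exponentially many in $D$ configurations after a suitable coarsening) produces $\gtrsim D\asymp\sqrt{\deg(A)}$ edges in $E(A,A^c)$ with failure probability $n^{-\Omega(1)}$, and taking a further union bound over dyadic values of $\deg(A)\in[\tilde c C(\log n)^2,\tilde Cn]$ completes the proof.
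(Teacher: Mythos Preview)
Your proposal has two genuine gaps.

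\textbf{The union bound does not close.} You want Poisson concentration in the random shell $T=T(A)$, with failure probability $e^{-\Omega(D)}$ where $D\asymp\sqrt{\deg(A)}$, and then to union-bound over a ``block-level skeleton'' of $A$. But the skeleton of a connected $A$ of diameter $D$ can occupy up to order $D^2$ boxes, and the number of connected box-sets of size $k$ is $C^k$, so you are facing $C^{D^2}$ configurations against an $e^{-cD}$ tail. No coarsening that still determines $T$ up to constants can bring this below $e^{o(D)}$. More fundamentally, $T$ is a function of $A$, which is itself a function of the PPP, so even formulating the concentration step cleanly requires Palm-type conditioning that you have not set up. The paper avoids this entirely: it establishes a \emph{single} high-probability event depending only on the PPP---via Kesten's large-deviation bound, every grid path of length $\ge\alpha\log n$ has a positive proportion of ``open'' edges---and then does purely deterministic geometry for each $A$ on that event. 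This is the key structural difference and the reason the paper's argument goes through.

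\textbf{The geometric step $\mathrm{Leb}(T)\gtrsim D$ is not established.} Your projection argument for $\mathcal H^1(\partial\widetilde A)\gtrsim D$ needs each vertical line through $\widetilde A$ to also meet $\widetilde A^c$; this fails if $\widetilde A$ contains a full column of the torus, and nothing you have assumed rules that out. Even granting $\mathcal H^1(\partial\widetilde A)\gtrsim D$, coarea gives $\mathrm{Leb}(T)=\int_R^r \mathcal H^1(\partial A_s)\,ds$, and the level-set lengths for $s\in(R,r)$ need not be comparable to $\mathcal H^1(\partial A_R)$. The paper handles the analogous topological difficulty by working with the white $*$-components $W_i$ of the complement and invoking bicoherence of the torus to control the number of components of the relevant boundary edge-set, together with a separate case analysis for when $W_i$ contains full rows or columns; your sketch has no substitute for this.

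A smaller point: the ``main obstacle'' you name is not one. Every PPP vertex $y\in T$ satisfies $d(y,A)<r$, so $y$ is adjacent in $G$ to some $x\in A\subset L_1(G)$ and hence $y\in L_1(G)$ automatically; no renormalisation is needed. The real obstacles are the two above.
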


\begin{proof}
Consider a regular grid on \(\Lambda_n^{(2)}\) of side length \(s\in\left[\frac{2}{\sqrt{13}}r,\,\sqrt{\frac25}\,r\right]\).
Each grid box is split by diagonals into four triangles, and we say that each triangle corresponds to the grid edge which it contains as a side.
Call a grid edge \emph{open} if there are points in both of its triangles.
An example is shown in \Cref{fig:percExample}.
This gives a \(\Ber{p}\) bond percolation, \(p =  (1-e^{-\frac{s^2}{4}})^2 \ge (1-e^{-\frac{r^2}{13}})^2\).
Assume \(r_{\min} > \sqrt{13\log(2+\sqrt{2})} \approx 3.995\), so that \(p > p_c = \frac12\).
For a path $\Gamma$ of grid edges we denote by $|\Gamma|$ the number of edges in that path.
Then by a theorem of Kesten \cite[Theorem 1]{kesten_large_dev_passage_time}, there exist constants \(a\) and \(C_1\) such that
\begin{align}
    \label{eq:long_path_positive_proportion_open}
    \pr{\forall  \Gamma \text{ with } |\Gamma|\ge \alpha\log n\text{, } \Gamma \text{ has } \ge a\cdot |\Gamma|\text{ open edges}}\ge
    1-\left(\frac{\sqrt{n}}{s}\right)^2\cdot 2e^{-C_1 \alpha\log n}.
\end{align}
For any constant \(\alpha>\frac{1}{C_1}\), the right hand side goes to \(1\) as \(n\to\infty\).
In particular, this tells us that there exists a constant $\alpha$ such that with high probability any path of length at least $\alpha\log n$ has at least some constant proportion of edges which are open.

\begin{figure}[htb]
\centering
\begin{tikzpicture}[font=\scriptsize]
    \begin{scope}
    \clip (0,0) rectangle (3,-3);
    \graph[simple, empty nodes, nodes={circle, draw, fill=black, inner sep=0pt, minimum size=1pt}]{
        {[grid placement] subgraph Grid_n[n=16]};
    };
    \coordinate (m1) at ($(2)!.5!(7)$);
    \coordinate (m2) at ($(6)!.5!(11)$);
    
    \draw[gray] (11) -- ($(2)!.5!(6)$);
    \draw[gray] (11) -- (m1);
    \draw[gray] (10) -- (11);

    \draw plot[mark=*,mark size=1pt, only marks] coordinates {
        (1.3,-0.8)
        (1.4,-1.1)
        (1.840576,-0.037255)
        (2.025000,-0.903449)
        (2.321730,-1.054476)
        (2.555099,-2.107670)
        (1.473781,-1.670222)
        (2.731213,-2.288483)
        (0.446671,-2.323859)
        (0.796457,-0.386927)
    };
    \draw[line width=1.5pt, red] (6) -- (7);
    \draw[line width=0.6, dashed] (6) -- (m2) -- (7) -- (m1) -- (6) -- (7);

    \path[gray] (11) -- node[below left=-8pt,yshift=0.5pt]{$\frac{\sqrt{13}}{2}s$} ($(2)!.5!(6)$);
    \path[gray] (11) -- node[below right=-8pt,xshift=4pt,yshift=1pt]{$\sqrt{\frac52}s$} (m1);
    \path[gray] (10) -- (11);
    \draw[gray, decoration={brace,raise=0.5pt,amplitude=3,mirror}, decorate] (10) -- node[below=1pt]{$s$} (11);

    \end{scope}
\end{tikzpicture}%
\caption{Illustration of boxes, triangles and openness. Here the red edge is open, the other grid edges are not. The condition \(s\in\left[\frac{2}{\sqrt{13}}r, \sqrt{\frac25}\,r\right]\) ensures \(r\) is bounded by the lengths of the oblique gray segments.}
\label{fig:percExample}
\end{figure}
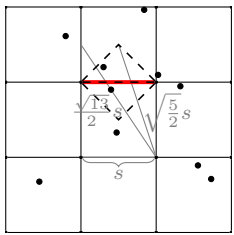

Choose a constant \(C > \max\left(\hat C,\, \frac{64r_{\max}^2}{5\til c \hat c^2 C_1^2}\right)\), where \(\til c\) is as in~\Cref{lemma:ordernedges} and \(\hat c\), \(\hat C\) are the constants \(c\), \(C\) from \Cref{prop:largesetshavelongdistance}.

Let \(A\) be connected with $\frac12 \ge  \pi(A) \ge \frac{C(\log n)^2}{n}$.
Call a box \emph{red} if it contains at least one point of \(A\), and otherwise call it \emph{white}.
Consider the white *-connected components, by which we mean maximal connected subgraphs in a graph where boxes are connected if they share at least one grid vertex (i.e.\ if they share a corner or an edge).

If at least $\varepsilon \sqrt{n}$ of such components contain a point of $A^c$, then the number of edges between $A$ and $A^c$ is at least $\frac{\varepsilon \sqrt{n}}{25}$.
Indeed, as \(L_1(G)\) is connected, each such component which has a point of $A^c$ also has a point of $A^c$ which is a neighbour in $L_1(G)$ either of some point in a red box or of a point in another component of white boxes.
If it is connected to a point in the red box, if that point is in $A$, this gives a connection between $A$ and $A^c$ corresponding to this component and if the point is in $A^c$, as we know that red boxes have at least one point of $A$ and all points in $s$-boxes are connected in $G$, there is a connection between $A$ and $A^c$ within this red box corresponding to this component of white boxes.
If the component only has a connection in $L_1(G)$ to another component of white boxes, then this connecting edge has to intersect a perpendicular bisector of a red box, but then, due to simple geometric bounds illustrated in \Cref{fig:edgeRadiusAndInnerCross},
one of the endpoints of the connecting edge has to be connected to a vertex in the red box in $A$ (which exists as the box is red), and that gives a connection between $A$ and $A^c$ corresponding to this component.
We see that we have identified a corresponding connection for each component of white boxes and that each of these connections can correspond to at most $25$ different components.

\begin{figure}[htb]
\centering
\begin{subfigure}{0.4\textwidth}
    \centering
    \begin{tikzpicture}[font=\scriptsize]
    \pgfmathsetmacro{\r}{sqrt(5/2)}
    \pgfmathsetmacro{\edger}{(sqrt 3)/2*\r}
    \pgfmathsetmacro{\rMinusEps}{sqrt(5/2)-0.4}

    \fill[gray!10!white] (\rMinusEps, \edger) arc (90:-90:\edger) -- (0,-\edger) arc (270:90:\edger) -- cycle;

    \draw[color=gray] (0.5*\rMinusEps,0) -- node[right=-3pt]{{$\frac{\sqrt 3}{2}r$}} (.5*\rMinusEps,\edger);
    \draw[color=gray] (-\r,0) -- node[below]{{$r$}} (0,0);
    \draw[color=gray] (\rMinusEps,0) -- node[below]{{$r$}} (\r+\rMinusEps,0);

    \draw plot[mark=*, mark size=1pt] coordinates {(0,0) (\rMinusEps,0)} circle (\r);
    \draw (0,0) circle (\r);
    \path (0,0) node[below]{$x$} -- node[below]{$\le r$} (\rMinusEps,0) node[below]{$y$};
\end{tikzpicture}%
    \caption{If $xy\in E$, $z\in V$ and \(d(xy, z) \le \frac{\sqrt 3}{2}r\), then $xz\in E$ or $yz\in E$.}
    \label{fig:edgeRadius}
\end{subfigure}
\quad
\begin{subfigure}{0.4\textwidth}
    \centering
    \begin{tikzpicture}
    \pgfmathsetmacro{\r}{sqrt(5/2)}
    \pgfmathsetmacro{\edger}{(sqrt 3)/2*\r}

    \begin{scope}
    \clip (0,0) rectangle (3,-3);

    \graph[simple, empty nodes, nodes={circle, draw=none, fill=none, inner sep=0pt, minimum size=1pt}, edges={draw=none}]{
        {[grid placement] subgraph Grid_n[n=16]};
    }; 

    \coordinate (m1) at ($(2)!.5!(7)$);
    \coordinate (m2) at ($(6)!.5!(11)$);
    \coordinate (m3) at ($(5)!.5!(10)$);
    \fill[red!20!white] (9) rectangle (6);
    \fill[red!20!white] (10) rectangle (7);
    \fill[red!20!white] (6) rectangle (3);
    \fill[black!80!red,opacity=0.15] (6) rectangle (3);

    \draw plot[mark=*,mark size=1pt] coordinates {(0.85,-0.8) (2.15,-1.05)};
    \path (0.85,-0.8) node[left]{$x$} -- (2.13,-1.17) node[right]{$y$};

    \draw[gray, line width=0.6, dashed] ($(m1)+(-.5,0)$) -- ++(1,0);
    \draw[gray, line width=0.6, dashed] ($(m1)+(0,-.5)$) -- ++(0,1);
    \draw[gray, line width=0.6, dashed] ($(m2)+(-.5,0)$) -- ++(1,0);
    \draw[gray, line width=0.6, dashed] ($(m2)+(0,-.5)$) -- ++(0,1);
    \draw[gray, line width=0.6, dashed] ($(m3)+(-.5,0)$) -- ++(1,0);
    \draw[gray, line width=0.6, dashed] ($(m3)+(0,-.5)$) -- ++(0,1);

    \graph[simple, empty nodes, nodes={circle, draw, fill=black, inner sep=0pt, minimum size=1pt}]{
        {[grid placement] subgraph Grid_n[n=16]};
    };
    \end{scope}
    \begin{scope}
        \clip (0,0) rectangle (3.7,-3); 
        \draw[color=gray] ($(m1)+(0,-.5)$) -- node[right=-1pt]{{$\frac{\sqrt 5}{2}s< \frac{\sqrt 3}2r$}} (3);
    \end{scope}
\end{tikzpicture}%
    \caption{If $xy\in E$ intersects a perpendicular bisector (dashed line) of a box \(B\) and $z\in V\cap B$, then, from (a), $xz\in E$ or $yz\in E$.}
    \label{fig:edgeRadiusInnerCross}
\end{subfigure}
\caption{Elementary geometric observations used in the proof of~\Cref{prop:largeconnsetshavelargeboundary}.}
\label{fig:edgeRadiusAndInnerCross}
\end{figure}
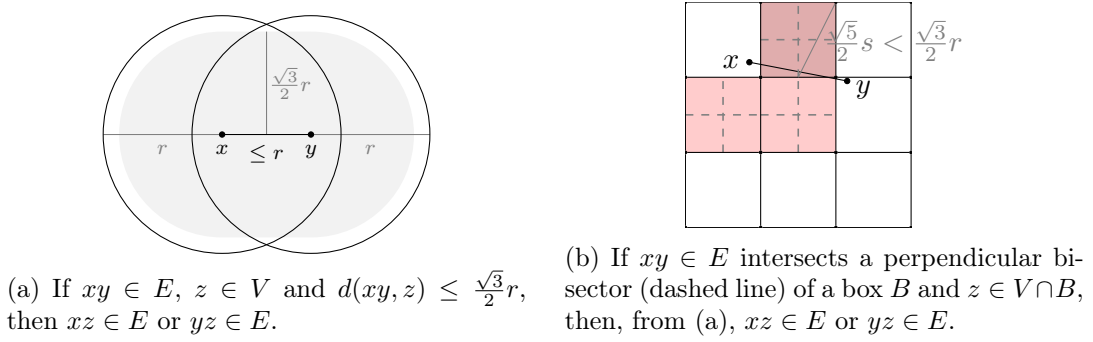

We may assume that at most $\varepsilon \sqrt{n}$ white *-connected components contain a point of $A^c$, so that the total \(\pi\)-mass of white components with \(\pi\)-mass smaller than \(\frac{C(\log n)^2}{n}\) is smaller than \(\eps\frac{C (\log n)^2}{\sqrt n} < \eps\), for \(n\) sufficiently large.
Indeed, \Cref{lemma:ordernedges} gives with high probability \(\deg(A) \le \deg(L_1(G)) \le 2\til C n\), so otherwise with high probability we have a lower bound $\frac{\varepsilon \sqrt{n}}{25}\ge \frac{\eps}{25\sqrt{2\til C}}\sqrt{\deg(A)}$ on the number of edges between $A$ and $A^c$.

Similarly, if \(\pi(A^c\text{ in red boxes})\geq \eps\) then with high probability the total degree of vertices in $A^c$ which are in red boxes would be at least $\til c\eps n$ and as with high probability the maximal degree in $L_1(G)$ is smaller than $\til c \sqrt{n}$ we would have at least $\eps\sqrt n$ vertices in $A^c$ in red boxes, but all of them would have a neighbour in $A$ inside their red box, and we would have at least $\varepsilon \sqrt{n}$  edges between $A$ and $A^c$. Therefore, we may assume \(\pi(A^c\text{ in red boxes})<\eps\).
So, taking $\eps\le \frac16$, a $\pi$-mass of at least \(\pi(A^c)-2\eps\ge \frac12 - 2\eps \ge \frac13\) comes from white components of \(\pi\)-masses \(\ge \frac{C(\log n)^2}{n}\).
Denote these components \(W_1, \dots, W_m\).

Let $W^+$ be the set of white boxes for which some edge between two points of \(A\) intersects their perpendicular bisector.
Let $\mathcal E^+$ be the set of grid edges of these boxes.
Fix \(i\in\{1,\dots,m\}\).
Let \(\mathcal E_i = \partial W_i \cup (\mathcal E^+\cap W_i)\), i.e.\ the set of grid edges which either are between a white box of $W_i$ and a red box or in \(\mathcal E^+\) and on $W_i$.

As a closed subset of \(\Lambda_n^{(2)}\), \(\mathcal E_i\) is the intersection of closed connected subsets which have union \(\Lambda_n^{(2)}\).
Indeed, we may take $W_i$ and $Z_i$, where $Z_i$ is the union of $\mathcal E^+$ and the boxes not in \(W_i\).
Note $W_i$ is connected as a closed subset of $\Lambda_n^{(2)}$ because it is *-connected as a set of boxes,
and $Z_i$ is connected because $A$ is connected in \(L_1(G)\), cf.\ \Cref{fig:addingGoodEdges}.
So by bicoherence of the torus \cite[Lemma 9.2]{penrose_rgg}, \(\mathcal E_i\) has at most two connected components.

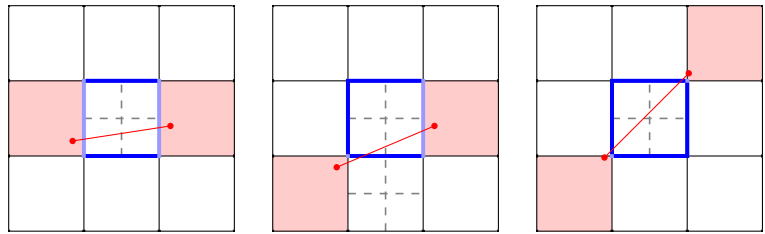
\begin{figure}[htb]
\centering
\begin{tikzpicture}
    \begin{scope}
    \clip (0,0) rectangle (3,-3);
    \graph[simple, empty nodes, nodes={circle, draw=none, fill=none, inner sep=0pt, minimum size=1pt}, edges={draw=none}]{
        {[grid placement] subgraph Grid_n[n=16]};
    }; 
    \coordinate (m1) at ($(2)!.5!(7)$);
    \coordinate (m2) at ($(6)!.5!(11)$);
    \fill[red!20!white] (9) rectangle (6);
    \fill[red!20!white] (11) rectangle (8);
    \draw[gray, line width=0.6, dashed] ($(m2)+(-.5,0)$) -- ++(1,0);
    \draw[gray, line width=0.6, dashed] ($(m2)+(0,-.5)$) -- ++(0,1);
    \graph[simple, empty nodes, nodes={circle, draw, fill=black, inner sep=0pt, minimum size=1pt}]{
        {[grid placement] subgraph Grid_n[n=16]};
    };
    \draw[line width=1.5pt, blue!40!white] (10) rectangle (7);
    \draw[line width=1.5pt, blue] (10) -- (11);
    \draw[line width=1.5pt, blue] (7) -- (6);
    
    \draw[color=red] plot[mark=*,mark size=1pt] coordinates {
        (0.85,-1.8)
        (2.15,-1.6)
    };
    
    \end{scope}
\end{tikzpicture}
\quad
\begin{tikzpicture}
    \begin{scope}
    \clip (0,0) rectangle (3,-3);
    
    \coordinate (m1) at ($(2)!.5!(7)$);
    \coordinate (m2) at ($(6)!.5!(11)$);
    \coordinate (m3) at ($(10)!.5!(15)$);
    \fill[red!20!white] (13) rectangle (10);
    \fill[red!20!white] (11) rectangle (8);
    \draw[gray, line width=0.6, dashed] ($(m2)+(-.5,0)$) -- ++(1,0);
    \draw[gray, line width=0.6, dashed] ($(m2)+(0,-.5)$) -- ++(0,1);
    \draw[gray, line width=0.6, dashed] ($(m3)+(-.5,0)$) -- ++(1,0);
    \draw[gray, line width=0.6, dashed] ($(m3)+(0,-.5)$) -- ++(0,1);
    \graph[simple, empty nodes, nodes={circle, draw, fill=black, inner sep=0pt, minimum size=1pt}]{
        {[grid placement] subgraph Grid_n[n=16]};
    };
    \draw[line width=1.5pt, blue!40!white] (10) rectangle (7);
    \draw[line width=1.5pt, blue] (10) -- (11);
    \draw[line width=1.5pt, blue] (7) -- (1,.-1) -- (10);

    \draw[color=red] plot[mark=*,mark size=1pt] coordinates {
        (0.85,-2.15)
        (2.15,-1.6)
    };
    \end{scope}
\end{tikzpicture}
\quad
\begin{tikzpicture}
    \begin{scope}
    \clip (0,0) rectangle (3,-3);
    
    \coordinate (m1) at ($(2)!.5!(7)$);
    \coordinate (m2) at ($(6)!.5!(11)$);
    \fill[red!20!white] (13) rectangle (10);
    \fill[red!20!white] (7) rectangle (4);
    \draw[gray, line width=0.6, dashed] ($(m2)+(-.5,0)$) -- ++(1,0);
    \draw[gray, line width=0.6, dashed] ($(m2)+(0,-.5)$) -- ++(0,1);
    \graph[simple, empty nodes, nodes={circle, draw, fill=black, inner sep=0pt, minimum size=1pt}]{
        {[grid placement] subgraph Grid_n[n=16]};
    };
    \draw[line width=1.5pt, blue!40!white] (10) rectangle (7);
    \draw[line width=1.5pt, blue] (10) -- (2,-2) -- (7);
    \draw[line width=1.5pt, blue] (7) -- (1,-1) -- (10);
    
    \draw[color=red] plot[mark=*,mark size=1pt] coordinates {
        (0.9,-2.02)
        (2.02,-0.9)
    };
    \end{scope}
\end{tikzpicture}%
\caption{The three cases where we use the additional edges.
Suppose two red boxes in different *-connected components are connected by an \(L_1(G)\)-edge $e$ with endpoints in \(A\).
Then \(e\) must intersect a perpendicular bisector of a white cell \(c\) (in the third case, this follows by \(r\le  \frac{\sqrt{13}}{2}s\)). So \(c\in W^+\) and the edges of \(c\), shown in blue, are in \(\mathcal E^+\).}
\label{fig:addingGoodEdges}
\end{figure}

Write \(L_i=\min(\sqrt{\deg(A)},\,\sqrt{\deg(W_i)})\).
In this paragraph we show with high probability \(\mathcal E_i\) contains three edges of pairwise distances at least \(\frac{\hat c}8L_i\).
Suppose \(n\) is large enough that \(\frac{\hat c}{8}L_i\ge 4s + r\).
By \Cref{prop:largesetshavelongdistance}, we can pick $x$ and $y$ in \(A^c \cap W_i\) with \(||x-y||_2\ge \hat cL_i\).
Without loss of generality, suppose that the distance in the first coordinate between some $x$ and $y$ is at least $\frac{\hat c}2L_i$.
Since \(W_i\) is *-connected, we can pick a third point \(z\) in \(W_i\) with distance in the first coordinate to $x$ and to $y$ at least $\frac{\hat c}4L_i$.
If the box column of \(x\) or one of its neighbouring columns contains a red box and the analogous statements hold for \(y\) and \(z\), the claim holds.
So we may assume \(W_i\) contains all of two adjacent columns.
But then we can also pick \(\til x,\til y,\til z\) in \(W_i\) with distances in the second coordinate at least  $\frac{\hat c}4L$, so we may also assume \(W_i\) contains all of two adjacent rows.
This makes $W_i^c$ a subset of some box, and applying \Cref{prop:largesetshavelongdistance} again, this time to \(A\), and using that \(A\) is connected in \(L_1(G)\), gives the claim.

Since two of these three edges must belong to the same component of \(\mathcal E_i\), \(\mathcal E_i\) contains a path \(\Gamma_i\) with at least \(\frac{\hat c}{8s}L_i\) edges.
By \eqref{eq:long_path_positive_proportion_open}, as \(C\) was chosen sufficiently large, with high probability at least a constant proportion \(a\) of these edges are open.
But when an edge \(e\in \mathcal E_i\) is open, i.e.\ when there are points in both of its triangles, there is an edge from a point of \(A^c\cap W_i\) assigned to $e$ to a point of \(A\), cf.\ Figures~\ref{fig:percExample} and~\ref{fig:edgeRadiusInnerCross}.

So with high probability there are at least \(\frac{a\hat c}{8s}L_i\) edges from \(A\) to \(A^c\cap W_i\), giving at least
\begin{align*}
  \frac{a\hat c}{8s}\min\left(\sqrt{\deg(A)},\,\sum_i \sqrt{\deg(W_i)}\right)
  & \ge \frac{a\hat c}{8s}\min\left(\sqrt{\deg(A)},\,\sqrt{\sum_i \deg(W_i)}\right)
  \ge c\sqrt{\deg(A)}
\end{align*}
edges from \(A\) to \(A^c\), where \(c:=\frac{a\hat c}{8s_{\max}\sqrt{3}}\), \(s_{\max}=\sqrt{\frac25}r_{max}\).

\end{proof}

\appendix 

\section{Proof of Lemma~\ref{lemma:sumsofdeginboxes}}\label{appendix:degreebounds}

\begin{proof}[Proof of Lemma~\ref{lemma:sumsofdeginboxes}]
Let \(B_1,\dots,B_{2^dn}\) be a cover of \(\Lambda_n^{(d)}\) by boxes of side length \((\log n)^{\frac{2}{d}}\).
For each \(i\), define the inflated box \(\til B_i = B_i+(2^{\frac{1}{d+1}}-1)\cdot(B_i-\mathrm{centre}(B_i))\) and let \(\til G_i\) be RGG (on a torus) with fundamental domain \(\til B_i\).
For \(n\) sufficiently large we have \(\frac{(2^{\frac{1}{d+1}}-1)(\log n)^{\frac{2}{d}}}{2} > r\), so that
\begin{equation}
\label{eq:degBiBoundedByDoubleTorusEdgeCount}
\sum_{x\in B_i} \deg_G(x) \le \sum_{x\in \til B_i} \deg_{\til G_i}(x)=2e(\til G_i).
\end{equation}

Write \(|E|:=e(\til G_i)\) and \(|V|:=|\til G_i|\). By \cite[Proposition~7.1]{rgg_too_many_edges}, for any \(\delta>0\),
\[
\lim_{n\to\infty}
\frac{\log \pr{|E| > (1+\delta)\E{|E|}}}{\sqrt{\E{|E|}}\log \E{|V|}} = -\sqrt{\frac \delta2}.
\]

So for any fixed \(\eps > 0\), for all \(n\) sufficiently large,
\begin{equation}
\label{eq:edgeuppertailbound}
\pr{|E| > (1+\delta)\E{|E|}} < e^{-(1-\eps)\sqrt{\frac \delta2} {\sqrt{\E{|E|}}\log \E{|V|}}}.
\end{equation}

For \(n\) sufficiently large, we have
\[\sqrt{\E{|E|}}\log \E{|V|} = \sqrt{\frac{\gamma_d r^d}{2}2^{\frac{d}{d+1}}(\log n)^2} \log\left(2^{\frac{d}{d+1}}(\log n)^2\right)\asymp \log n \log(\log n)^2.\]

Plugging this into~\eqref{eq:edgeuppertailbound} with \(\delta=2^{\frac{1}{d+1}}-1\) and, say, \(\eps=\frac12\), we find
\begin{equation}
\pr{2e(\til G_i) > 2\E{\sum_{x\in B_i} \deg_G(x)}} =  \pr{e(\til G_i) > 2^{\frac{1}{d+1}}\frac12\gamma_d r^22^{\frac{d}{d+1}}(\log n)^2} < n^{-2}
\end{equation}
for all \(n\) sufficiently large.
Recalling \eqref{eq:degBiBoundedByDoubleTorusEdgeCount}, the claim now follows by a union bound over the \(2^d n\) boxes.
\end{proof}

\section{Proof of Claim~\ref{claim:numberofsizeksetswithnottoomanystarcomps}}\label{appendix:countingclaim}
\begin{proof}[Proof of Claim~\ref{claim:numberofsizeksetswithnottoomanystarcomps}]
Assume without loss of generality that \(1\le \log n \le k \le n\).
By a Peierls argument \cite[Lemma~9.3]{penrose_rgg}, we may choose a constant \(C_1=C_1(d)\) such that for all \(k\), \(\Z^d\) has \(\le C_1^k\) *-connected subsets of size \(k\) containing \(0\). Given a regular \(d\)-dimensional toroidal grid with \(\le n\) vertices, the following procedure can produce any subset of size \(k\) with \(\le \frac{k}{\log n}\) *-connected components:
\begin{enumerate}
    \item\label{item:s1} Choose an integer \(r\) between \(1\) and \(\frac{k}{\log n}\).
    \item\label{item:s2} Choose distinct points \(\{x_i\}_{i=1}^r\) of the grid.
    \item\label{item:s3} Choose positive integers \(\{k_i\}_{i=1}^r\) with \(\sum_{i=1}^r k_i=k\).
    \item\label{item:s4} Choose *-connected subsets \(\{A_i\}_{i=1}^r\) of the grid, with \(x_i\in A_i\) and \(|A_i|=k_i\) for all \(i\).
    \item The produced subset is \(A:=\bigcup_{i=1}^r A_i\).
\end{enumerate}

The combined number of choices for steps~\ref{item:s1} and~\ref{item:s2} is
\begin{equation*}
    \sum_{r=1}^{\left\lfloor\frac{k}{\log n}\right\rfloor} \binom{n}{r} \le
    \sum_{r=1}^{\left\lfloor\frac{k}{\log n}\right\rfloor} n^r \le
    \frac{k}{\log n} n^{\frac{k}{\log n}} \le
    e^{2k}.
\end{equation*}

The number of choices for step~\ref{item:s3} is (by stars and bars) $\binom{k-1}{r-1}\le 2^{k-1}$. The number of choices for step~\ref{item:s4} is \(\le \Pi_{i=1}^r C_1^{k_i} = C_1^k\). Hence, the claim holds with \(C=e^22C_1\).
\end{proof}

\subsection*{Acknowledgements} We thank Perla Sousi for suggesting this question and for helpful discussions. This work was supported by the Cambridge Summer Research in Mathematics Bursary Fund and St Edmund's College Arie Hector Award. 

\bibliography{RGG.bib}
\bibliographystyle{plain}

\end{document}